\definecolor{clemson-orange}{RGB}{234,106,32}
\definecolor{chicago-maroon}{RGB}{128,0,0}
\definecolor{cincinnati-red}{RGB}{190,0,0}
\definecolor{soft-cyan}{RGB}{68,85,90}
\newcolumntype{L}[1]{>{\raggedright\let\newline\\\arraybackslash\hspace{0pt}}m{#1}}
\newcolumntype{C}[1]{>{\centering\let\newline\\\arraybackslash\hspace{0pt}}m{#1}}
\newcolumntype{R}[1]{>{\raggedleft\let\newline\\\arraybackslash\hspace{0pt}}m{#1}}
\newcommand{\bb}{\mathbb}
\newcommand{\R}{\bb R}
\newcommand{\Z}{{\bb Z}}
\newcommand{\N}{{\bb N}}
\theoremstyle{definition}
\newtheorem{theorem}{Theorem}[section]
\newtheorem{lemma}[theorem]{Lemma}
\newtheorem{corollary}[theorem]{Corollary}
\newtheorem{prop}[theorem]{Proposition}
\newtheorem{definition}[theorem]{Definition}
\newtheorem{example}[theorem]{Example}
\newtheorem{conjecture}[theorem]{Conjecture}
\newcommand*\patchAmsMathEnvironmentForLineno[1]{%
  \expandafter\let\csname old#1\expandafter\endcsname\csname #1\endcsname
  \expandafter\let\csname oldend#1\expandafter\endcsname\csname end#1\endcsname
  \renewenvironment{#1}%
     {\linenomath\csname old#1\endcsname}%
     {\csname oldend#1\endcsname\endlinenomath}}%
\newcommand*\patchBothAmsMathEnvironmentsForLineno[1]{%
  \patchAmsMathEnvironmentForLineno{#1}%
  \patchAmsMathEnvironmentForLineno{#1*}}%
\DeclareMathOperator*{\conv}{conv}
\DeclareMathOperator*{\argmax}{argmax}
\DeclareMathOperator*{\poly}{poly}
\def\ve#1{\mathchoice{\mbox{\boldmath$\displaystyle\bf#1$}}
{\mbox{\boldmath$\textstyle\bf#1$}}
{\mbox{\boldmath$\scriptstyle\bf#1$}}
{\mbox{\boldmath$\scriptscriptstyle\bf#1$}}}
\newcommand{\x}{{\ve x}}
\renewcommand{\a}{{\ve a}}
\numberwithin{equation}{section}  
\title{Enumerating integer points in polytopes with bounded subdeterminants}
\author{Hongyi Jiang\thanks{Department of Applied Mathematics and Statistics, Johns Hopkins University, Baltimore, MD, USA ({\tt hjiang32@jhu.edu}, {\tt basu.amitabh@jhu.edu}). Supported by the ONR Grant N000141812096, NSF Grant CCF2006587 and the AFOSR Grant FA95502010341.} 
\and Amitabh Basu\footnotemark[1]}
\date{\today}
\begin{document}

\maketitle

\begin{abstract} We show that one can enumerate the vertices of the convex hull of integer points in polytopes whose constraint matrices have bounded and nonzero subdeterminants, in time polynomial in the dimension and encoding size of the polytope. This extends a previous result by Artmann et al. who showed that integer linear optimization in such polytopes can be done in polynomial time.\end{abstract}

\section{Integer points in polytopes}
Understanding the structure of integer points in polytopes is a central question in discrete mathematics and geometry of numbers. In the last 50 years, several algorithmic breakthroughs have been made in three fundamental questions, listed in increasing order of difficulty: 1) testing if a given polytope contains an integer point, 2) finding the optimum integer point in a polytope with respect to a linear objective function, 3) enumerating or counting integer points in a polytope. It is well-known that even question 1) is NP-complete, without making further assumptions. Nevertheless, significant progress has been made in understanding under what conditions polynomial time algorithms can be designed for these three tasks. Two prominent research directions have involved investigating the fixed dimension case and the bounded subdeterminant case. To make things precise, consider polytopes given by $\{x\in \R^n: Ax \leq b\}$ where $A\in \Z^{m\times n}$ and $b \in \Z^m$. We have assumed integer data here as will not be considering nonrational polytopes (nevertheless, there is some subtlety involved with integer data in the bounded subdeterminant analysis that we will point out below {-- see the discussion after Lemma~\ref{lem::width-prop}}). 
Two parameters that have received a lot of attention are the dimension $n$ and the maximum absolute value of any $n\times n$ subdeterminant of $A$, denoted in this paper by $\Delta_A$ (some authors have also worked with the maximum $k\times k$ subdeterminant of $A$ over all possible $k\in \{1, \ldots, n\}$). Other parameters have also been studied extensively~\cite{eisenbrand2019algorithmic,onn2010nonlinear,de2012algebraic} (this is a very small sample biased towards monographs and very recent work). We will focus on $n$ and $\Delta_A$ in this paper.

\subsection{Fixed dimension}

Lenstra~\cite{Lenstra83} sparked an active line of research by showing that the linear optimization problem over integer points in a polytope can be solved in polynomial time, if we focus on the family of polytopes in some fixed dimension $n$. The original running time obtained by Lenstra was $2^{O(n^3)}\cdot\poly(n,size(A,b,c))$, where $size(A,b,c)$ denotes the total {binary} encoding size of $A,b$ and the objective vector $c\in \R^n$. {For the definition of binary encoding sizes, see for example \cite[Section 2.1]{SchrijverBOOK}}. Subsequent refinements and improvements have steadily appeared~\cite{Kannan87,GroetschelLovaszSchrijver-Book88,Dadush12,hildebrand2013new,KhachiyanPorkolab00,Heinz05,dadush2011enumerative,cook-hartmann-kannan-mcdiarmid-1992}. The best dependence of the running time on $n$ is currently $2^{O(n\log n)}$ and one of the outstanding open questions in the area is to decide if this can be improved to $2^{O(n)}$. 

The counting problem was also shown to be polynomial time solvable in fixed dimension, starting with the seminal work of Barvinok~\cite{Barvinok94}. Several improvements and variations on Barvinok's insights have been obtained since then; see~\cite{de2012algebraic} for a survey.

\subsection{Bounded subdeterminants}

A classical result in polyhedral combinatorics states that if $\Delta_A = 1$, then all vertices of the polytope are integral~\cite{conforti2014integer}. Thus, using linear programming algorithms, one can solve the integer optimization problem $\max\{c\cdot x: Ax \leq b\}$ in time polynomial in $n$ and the encoding size of $A,b$ and $c$. Veselov and Chirkov had the remarkable insight that if $\Delta_A =2$, then the feasibility problem can be solved in polynomial time~\cite{veselov2009integer}. Artmann, Weismantel and Zenklusen used deep results from combinatorial optimization to establish that the linear optimization problem can also be solved in strongly polynomial time if $\Delta_A=2$~\cite{artmann2017strongly}. The polynomial time solvability of the feasibility, optimization or counting questions for the family of polytopes with $\Delta_A$ bounded by a constant has been another long standing open question in discrete optimization. See~\cite{artmann2016note,gribanov2016width,gribanov2016integer} for some more recent work in this direction.

\section{Our contribution}\label{sec:our-results}

Our main result is the following.\footnote{After this paper was posted on \url{arxiv.org}, Dr. Joseph Paat informed us through personal communication of an alternate proof of this result that uses some recent results on mixed-integer reformulations of integer programs~\cite{paat2020integrality,paat2021integrality}. We give an outline of Dr. Paat's arguments in Section~\ref{sec:conclude}.}.

\begin{theorem}\label{thm:main} Let $\Delta \in \N$ be a fixed natural number. Consider the family of polytopes $P:= \{x\in \R^n : Ax \leq b\}$ where $A\in \Z^{m\times n}$ is such that $\Delta_A \leq \Delta$ and all $n\times n$ subdeterminants of $A$ are nonzero. One can enumerate all the vertices of the convex hull of integer points in $P$ in time polynomial in $n$ and encoding size of $A$ and $b$.
\end{theorem}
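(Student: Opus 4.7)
The plan has three components that together prove the theorem. \emph{(i)} An integer optimization oracle: the Artmann-Weismantel-Zenklusen-type result cited in the introduction gives, in polynomial time, $\mathcal{O}(c) := \argmax\{c^\top x : x \in P \cap \Z^n\}$ for any $c \in \Z^n$; I would use this as a black box (and verify its applicability to the family in the hypothesis). \emph{(ii)} A polynomial upper bound on $|\mathrm{vert}(P_I)|$, where $P_I := \conv(P \cap \Z^n)$. \emph{(iii)} An oracle-driven vertex enumeration scheme that converts (i) and (ii) into a polynomial-time algorithm.

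Component (iii) is a standard inner-approximation loop: maintain $V \subseteq \mathrm{vert}(P_I)$ and $Q = \conv(V)$; at each step, search for a facet inequality $c^\top x \leq d$ of $Q$ such that $c^\top \mathcal{O}(c) > d$, in which case $\mathcal{O}(c)$ is a new vertex to add to $V$; when no such facet exists, $Q = P_I$. Each iteration requires polynomially many calls to $\mathcal{O}$ and to an LP solver over the current $Q$, and the loop terminates in at most $|\mathrm{vert}(P_I)|$ iterations. The bit-complexity of the intermediate representations stays polynomial because every vertex added comes from $\mathcal{O}$ and therefore has polynomial encoding.

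Component (ii) is the main obstacle, and the plan is induction on $n$ driven by the width property (Lemma~\ref{lem::width-prop}). In one branch, if the lattice width of $P$ along some direction $w$ is bounded by a constant $f(\Delta)$, then $P \cap \Z^n$ is contained in $O(f(\Delta))$ parallel affine hyperplanes $w^\top x = k_j$; each slice $P_j = P \cap \{w^\top x = k_j\}$ is an $(n-1)$-dimensional polytope which, after a unimodular change of coordinates aligning $w$ with a standard basis vector, yields a smaller constraint matrix with a bounded-nonzero-subdeterminant structure, and by induction $P_j$ contributes polynomially many vertices to $P_I$. In the other branch, if the lattice width exceeds $f(\Delta)$, one argues that the nonzero-subdeterminant hypothesis forces $P_I$ to be "close" to $P$ in a quantified sense, bounding vertex count directly; alternatively, one shows that for each vertex $v^*$ of $P$, only polynomially many vertices of $P_I$ lie in its normal cone, and uses the bounded-subdeterminant structure of $A$ to bound the number of "active" normal cones that actually contribute a vertex to $P_I$.

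The hardest step will be ensuring that the nonzero-subdeterminant hypothesis is preserved in the correct form under the slicing operation, since a generic slice of a polytope with bounded nonzero $n \times n$ subdeterminants need not yield an $(n{-}1)$-dimensional polytope with the analogous property. Choosing $w$ to be a row of $A$ and performing a unimodular reparametrization so that $w$ becomes a coordinate direction should allow one to read off the inherited structure, but verifying this cleanly (and controlling how $\Delta$ grows across the recursion, to keep the final bound polynomial rather than quasi-polynomial in $\Delta^{O(n)}$ size) is the crux of the proof.
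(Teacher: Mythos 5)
There is a genuine gap, and it sits exactly where you locate the difficulty: component \emph{(ii)}. Your bounded-width branch recurses on $O(f(\Delta))$ parallel slices, and this branching can happen at every level of an $n$-deep recursion, so the count it yields is of order $f(\Delta)^{n}$ --- exponential in $n$, which is precisely what the theorem must avoid (this is the Lenstra-style recursion, fine for fixed dimension but not here). Your large-width branch (``$P_I$ is close to $P$'', or ``bound the number of active normal cones'') contains no actual argument, and bounding the vertices of the integer hull of a general polytope with bounded subdeterminants is the open problem, not a step one can wave at. Moreover, you never use the nonzero-minor hypothesis where it actually does the work: in the paper it enters through Lemma~7 of \cite{artmann2016note}, which says that for $n>C(\Delta)$ the matrix $A$ has at most $n+1$ rows, so $P$ is a simplex; everything afterwards is simplex-specific. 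In the thin case (smallest facet width at most $\Delta-2$), Theorem~\ref{thm::small-simplex} bounds all facet widths, and the tree of Algorithm~\ref{alg::bounded-width} has polynomially many nonempty nodes only because a slice of a simplex parallel to a facet is a \emph{scaled copy} of that facet (Proposition~\ref{prop:width-red}), so widths shrink geometrically along non-principal branches --- a mechanism your general-polytope slicing does not have. In the fat case, Lemma~\ref{lem::rep-of-vert} (a pigeonhole argument giving $\prod_i(\mu_i+1)\le\det$) confines every vertex of the integer hull to corner simplices of facet width $\Delta-1$, reducing to the thin case (Theorem~\ref{thm::all-vert-final}). None of this structure is recoverable from your sketch, and your own worry about how the nonzero-subdeterminant property survives slicing is symptomatic of the approach heading in a direction the hypothesis does not support.

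Component \emph{(iii)} also has a flaw as stated: the inner-approximation loop requires a facet description of the current hull $Q=\conv(V)$, and intermediate hulls can have exponentially many facets even when the final vertex count is small (a cross-polytope-like hull has $2n$ vertices and $2^{n}$ facets), so ``search for a violated facet of $Q$'' is not a polynomial-time step; oracle-based vertex enumeration in output-polynomial time is not an off-the-shelf black box. The paper sidesteps this entirely by producing an explicit polynomial-size \emph{superset} of candidate integer points (all integer points of the thin corner simplices $S_i$ in Algorithm~\ref{alg::large-width}) and then filtering with polynomially many LPs. Finally, a small point: the strongly polynomial optimization result of Artmann--Weismantel--Zenklusen is for $\Delta_A=2$ only; the optimization result matching the present hypothesis (bounded, nonzero $n\times n$ minors) is the one in \cite{artmann2016note}, and even with that oracle in hand, points (ii) and (iii) above remain unresolved in your plan.
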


This result does not fully resolve the open question of solving integer optimization with bounded subdeterminants in polynomial time because of the nontrivial restriction that all $n\times n$ minors of $A$ have to be nonzero. Nevertheless, Theorem~\ref{thm:main} improves upon the result in~\cite{artmann2016note} where the authors give a polynomial time algorithm for the integer optimization problem under the same hypothesis. We strengthen that result by showing that one can actually enumerate all the vertices of the integer hull, i.e., the convex hull of integer points in the polytope, in polynomial time. This is in line with results cited above from~\cite{cook-hartmann-kannan-mcdiarmid-1992} and~\cite{Barvinok94}, but in the bounded subdeterminant regime instead of the fixed dimension setting.

Theorem~\ref{thm:main} follows from the following three results established in this paper that we believe are of independent interest. We begin by recalling the notion of width.

\begin{definition}\label{def:width}
Given a set $S\subseteq \R^n$ and a vector $v \in \R^n$, the {\em width} of $S$ in the direction $v$ is defined as $$w(v, S) := \max_{x \in S} v \cdot x - \min_{x\in S} v \cdot x.$$ If $P := \{x \in \R^n:Ax\leq b\}$, we use the notation $w(v, A, b)$ to denote the width of $P$ in the direction $v$. If $v$ is a row of $A$ defining a facet, then $w(v, A, b)$ will be called the corresponding {\em facet width}. 
\end{definition}

\begin{theorem}\label{thm::bounded-width-all} Let $S := \{x \in \R^n: Ax \leq b\}$ be a full-dimensional simplex, with $A \in \Z^{(n+1)\times n}$ and $b \in \R^{n+1}$ with the facet width of the first $n$ facets bounded by $W$. Then the number of integer points in $S$ is polynomial in $n$, if $W$ is a fixed constant independent of $n$. Moreover, there is a polynomial time algorithm that enumerates all the integer points.
\end{theorem}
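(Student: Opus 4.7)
The plan is to parametrize integer points of $S$ by their slacks on the first $n$ facets and show that these slacks lie in a polynomial-size set. First I would exploit the simplex structure: since $S$ is a full-dimensional $n$-simplex with exactly $n+1$ facets, any $n$ of its facet hyperplanes meet in a single vertex. In particular, writing $A_1\in\Z^{n\times n}$ for the submatrix of the top $n$ rows of $A$ and $b'=(b_1,\dots,b_n)^\top$, the matrix $A_1$ is invertible and $v:=A_1^{-1}b'$ is the vertex of $S$ opposite the last facet. For each $x\in S\cap\Z^n$ I define $s_j:=\lfloor b_j\rfloor-a_j\cdot x$; since $a_j\cdot x\in\Z$ and $a_j\cdot x\le b_j$, $s_j$ is a nonnegative integer, and by the facet-width hypothesis $s_j\le c_j\le W$, where $c_j:=w(a_j,A,b)$. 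The map $x\mapsto s$ is injective because $s$ determines $A_1 x$ and $A_1$ is invertible.

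The crux would be to transform the remaining constraint $a_{n+1}\cdot x\le b_{n+1}$ into a bound on $\sum_j s_j$. I would parametrize $S$ by $x=v+E\lambda$, where the columns $e_j$ of $E$ are the simplex edges emanating from $v$, so that $a_i\cdot e_j=0$ for $i\ne j$ with $i,j\le n$ and $a_j\cdot e_j=-c_j$; equivalently $A_1E=-\diag(c)$, which gives $\lambda_j=(b_j-a_j\cdot x)/c_j=(\{b_j\}+s_j)/c_j$. Now comes the key observation: each endpoint $v+e_j$ is a vertex of $S$ opposite facet $j$, hence lies on every facet except facet $j$---in particular on the last facet. Therefore $a_{n+1}\cdot e_j=b_{n+1}-a_{n+1}\cdot v=:\mu$ is the \emph{same} value for every $j\le n$, and $a_{n+1}\cdot x\le b_{n+1}$ reduces to $\mu\sum_j\lambda_j\le\mu$, i.e., $\sum_j\lambda_j\le 1$. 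Dropping the nonnegative fractional contributions, $\sum_j s_j/c_j\le 1$, and since $c_j\le W$ this forces $\sum_{j=1}^n s_j\le W$.

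Hence every integer point of $S$ yields some $s\in\Z^n_{\ge 0}$ with $\sum_j s_j\le W$, and the number of such $s$ is $\binom{n+W}{W}=O(n^W)$, polynomial in $n$ for fixed $W$. To enumerate, I would iterate over all such $s$, solve the nonsingular integer system $A_1 x=\lfloor b'\rfloor-s$ in polynomial time, and keep $x$ if $x\in\Z^n$ and $a_{n+1}\cdot x\le b_{n+1}$. Injectivity of $x\mapsto s$ ensures each integer point in $S$ appears exactly once, and the total running time is polynomial.

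The main obstacle---and the step that genuinely uses the simplex structure rather than, say, a more general polytope with bounded facet widths---is the independence of $a_{n+1}\cdot e_j$ from $j$: this is what converts the naive product-type bound $\prod(c_j+1)$, which is exponential in $n$, into the sum-type bound $\sum s_j\le W$, which has only $O(n^W)$ configurations. A secondary subtlety is the possibility of irrational $b$: the floor in $s_j=\lfloor b_j\rfloor-a_j\cdot x$ preserves integrality, and the extra slack $\sum_j\{b_j\}/c_j$ that this introduces into $\sum s_j/c_j\le 1-\sum_j\{b_j\}/c_j$ only tightens the final bound.
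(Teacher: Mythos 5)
Your proposal is correct, and it takes a genuinely different route from the paper. The paper proves this theorem by a breadth-first, branch-and-bound style enumeration of slices parallel to the first $n$ facets: the key lemma (its Proposition on slices of a simplex) shows that every ``non-principal'' branching shrinks the relevant facet width by a factor $\frac{W-1}{W}$, so any root-to-leaf path contains at most $K=\lceil \log_2(W+1)/(\log_2 W-\log_2(W-1))\rceil$ non-principal nodes, giving a bound of roughly $n\sum_{j=0}^{K}\binom{n}{j}(W+1)^{j+1}$ nonempty nodes, each tested by an LP. You instead parametrize integer points directly by their integer slacks $s_j=\lfloor b_j\rfloor-a_j\cdot x$ on the first $n$ facets and observe that, in the barycentric coordinates anchored at the vertex $v=A_1^{-1}b'$, the single remaining constraint $a_{n+1}\cdot x\le b_{n+1}$ becomes $\sum_j\lambda_j\le 1$ (because every edge $e_j$ from $v$ ends on the last facet, so $a_{n+1}\cdot e_j$ is independent of $j$, and $\mu=b_{n+1}-a_{n+1}\cdot v>0$ by full-dimensionality), which forces $\sum_j s_j\le W$. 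This yields the explicit and sharper count $\binom{n+\lfloor W\rfloor}{\lfloor W\rfloor}=O(n^{\lfloor W\rfloor})$, compared with the paper's $n^{O(K)}$ with $K\approx W\log W$, and an enumeration that needs only solves of the nonsingular integer system $A_1x=\lfloor b'\rfloor-s$ plus one inequality check, rather than LP feasibility tests at every tree node; it also handles real $b$ cleanly, since the fractional parts only tighten $\sum_j s_j/c_j\le 1$. What the paper's slicing argument buys in exchange is a width-decay mechanism (slices of a simplex are scaled translates of a facet) that it reuses conceptually elsewhere and that echoes the Cook--Hartmann--Kannan--McDiarmid technique, but as a proof of this particular theorem your global slack-vector argument is more elementary and quantitatively cleaner. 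Two small points to make explicit in a final write-up: all $c_j>0$ because $S$ is full-dimensional (needed to divide by $c_j$), and $A_1$ is invertible because the first $n$ rows are the normals of the $n$ facets through $v$, which also gives the injectivity of $x\mapsto s$.
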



{\begin{theorem}\label{thm::small-simplex}
There exists a function $f: \N \to \N$ with the following property. For any full-dimensional simplex described by $Ax\leq b$, where $A\in \Z^{(n+1)\times n}$, $b\in \R^{n+1}$ with smallest facet width $W_{min}$, all its facet widths are bounded by $W_{min}f(\Delta_A)$. \end{theorem}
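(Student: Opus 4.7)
My plan is to derive an explicit closed-form formula for each facet width as a ratio of two determinants, and then bound the ratio of any two facet widths using only integrality of $A$ and nondegeneracy of the simplex. Write $a_i$ for the $i$-th row of $A$ and $A_{-i} \in \Z^{n \times n}$ for the matrix obtained by deleting row $i$ from $A$. Since $S$ is a full-dimensional simplex, each vertex is determined by $n$ of the $n+1$ constraints with equality, so the vertex $v_i$ opposite facet $i$ is the unique solution of $A_{-i} v_i = b_{-i}$, and in particular $A_{-i}$ is nonsingular.

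The width of $S$ in direction $a_i$ equals $b_i - a_i \cdot v_i$, since $\max_{x \in S} a_i \cdot x = b_i$ is attained on facet $i$ while $\min_{x \in S} a_i \cdot x = a_i \cdot v_i$ is attained at the opposite vertex (any other vertex $v_j$ lies on facet $i$ and hence satisfies $a_i \cdot v_j = b_i$). Applying the Schur complement identity to the $(n+1)\times(n+1)$ block matrix
$$M_i := \begin{pmatrix} A_{-i} & b_{-i} \\ a_i & b_i \end{pmatrix}$$
yields $\det(M_i) = \det(A_{-i}) \cdot (b_i - a_i A_{-i}^{-1} b_{-i}) = \det(A_{-i}) \cdot w(a_i, A, b)$. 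Because $M_i$ is obtained from $[A \mid b]$ by a single row permutation, $|\det(M_i)| = |\det([A \mid b])|$, and therefore
$$w(a_i, A, b) = \frac{|\det([A \mid b])|}{|\det(A_{-i})|}.$$

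The numerator is independent of $i$. Each $|\det(A_{-i})|$ is a positive integer (nonzero by nondegeneracy and integral since $A \in \Z^{(n+1) \times n}$) that is bounded above by $\Delta_A$ by definition. Hence for any two facet indices $i, j$,
$$\frac{w(a_i, A, b)}{w(a_j, A, b)} = \frac{|\det(A_{-j})|}{|\det(A_{-i})|} \leq \Delta_A,$$
and letting $j$ be a facet attaining the minimum width $W_{min}$ gives $w(a_i, A, b) \leq \Delta_A \cdot W_{min}$ for every $i$. Thus $f(\Delta) := \Delta$ suffices. The only step requiring any care is the Schur complement computation together with tracking signs through the row permutation; beyond that the argument reduces to a single determinant identity, so I do not anticipate a serious obstacle.
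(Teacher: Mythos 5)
Your proof is correct, and it takes a genuinely different route from the paper. The paper first reduces to the Hermite Normal Form of $A$ using the unimodular invariance of width (Lemma~\ref{lem::width-prop}~(d)), then combines entrywise bounds on $H$ (Lemma~\ref{lem::bounded-elem}, imported from Artmann et al.) and on $H^{-1}$ (Lemma~\ref{lem::bounded-elem-inverse}) with the width-ratio formula of Proposition~\ref{prop::constant-ratio} to get the bound $g(\Delta)\Delta^{\log_2(\Delta)}(\lceil\log_2(\Delta)\rceil+1)!$ of Theorem~\ref{thm::bounded-ratio}. You instead write each facet width in closed form, $w(a_i,A,b)=|\det([A\,|\,b])|/|\det(A_{-i})|$, via the Schur complement identity, so that the ratio of any two facet widths is exactly a ratio of two $n\times n$ minors of the integer matrix $A$; since each such minor is a nonzero integer of absolute value at most $\Delta_A$ (nonsingularity of $A_{-i}$ follows, as you argue, from the vertex opposite facet $i$ having $n$ linearly independent active constraints), the ratio is at most $\Delta_A$ and $f(\Delta)=\Delta$ suffices. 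This is in fact the same quantity the paper bounds --- one can check that the denominator $-a_{n+1}^Ta_i'$ in Proposition~\ref{prop::constant-ratio} equals $\pm\det(A_{-i})/\det(\hat A)$ by cofactor expansion --- but your argument sees it directly as a minor ratio, avoids the HNF machinery and the external lemma entirely, works verbatim for $b\in\R^{n+1}$ (the numerator $|\det([A\,|\,b])|$ cancels regardless of rationality), and yields a sharper, essentially tight bound, as the triangle with vertices $(0,0)$, $(\Delta,0)$, $(0,1)$ shows. The only points needing the care you already flag are the sign bookkeeping (handled by taking absolute values, since $w(a_i,A,b)\geq 0$) and the positivity of the widths, which follows from full-dimensionality and the fact that every row of $A$ is a nonzero facet normal, so the ratios are well defined.
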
}

\begin{theorem}\label{thm::all-vert-final} Let $\Delta \in \N$ be a fixed natural number. Consider the family of simplices $S:= \{x\in \R^n : Ax \leq b\}$ where $A\in \Z^{m\times n}$ and $b\in \Z^m$ such that $1 \leq \Delta_A \leq \Delta$ and smallest facet width greater than or equal to $\Delta -1$. There is an algorithm that enumerates all the vertices of the integer hull of $S$ in time polynomial in $n$ and encoding size of $A$ and $b$.
\end{theorem}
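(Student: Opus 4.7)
The plan is to cover every vertex of the integer hull of $S$ by the integer points of a family of ``corner sub-simplices'' $S_1, \ldots, S_{n+1}$, one homothetic to $S$ about each LP vertex of $S$, each of which has bounded facet widths so that Theorem~\ref{thm::bounded-width-all} enumerates its integer points in polynomial time. Write the facets of $S$ as $a_j \cdot x \le b_j$ for $j = 1, \ldots, n+1$, let $v^{(i)}$ be the vertex of $S$ opposite to facet $i$, let $w_i := w(a_i, A, b) = b_i - a_i \cdot v^{(i)}$ be the $i$-th facet width, and set $W := \Delta - 1$. The hypothesis $W_{min} \geq \Delta - 1$ gives $W \leq w_i$ for every $i$, so the sub-simplex
$$S_i := S \cap \bigl\{x : a_i \cdot x \leq a_i \cdot v^{(i)} + W\bigr\}$$
is precisely the homothet of $S$ centered at $v^{(i)}$ with ratio $W/w_i \in (0,1]$, and is again a simplex with $n+1$ facets.

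For each $j \neq i$, the facet of $S_i$ with normal $a_j$ has width $w_j W/w_i$. Combining Theorem~\ref{thm::small-simplex} with $w_i \geq W_{min}$ gives
$$w_j \cdot \tfrac{W}{w_i} \;\leq\; f(\Delta) \cdot W_{min} \cdot \tfrac{W}{w_i} \;\leq\; f(\Delta)\, W \;=\; (\Delta - 1)\, f(\Delta),$$
a constant depending only on $\Delta$. Hence $n$ of the $n+1$ facet widths of $S_i$ are uniformly bounded (in fact all of them are, since the new facet has width at most $W$), so Theorem~\ref{thm::bounded-width-all} enumerates $\Z^n \cap S_i$ in time polynomial in $n$ and the encoding length of $A$ and $b$. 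The algorithm then forms $T := \bigcup_{i=1}^{n+1}(\Z^n \cap S_i)$, a set of polynomial size, computes $\conv(T)$, and returns its vertices; identifying the vertices of $\conv(T)$ reduces to $|T|$ polynomial-size LP feasibility tests ``is $p \in \conv(T \setminus \{p\})$?''. Correctness reduces to the \emph{covering claim}: every vertex of the integer hull of $S$ lies in $T$. Granting this, $T \subseteq S \cap \Z^n$ together with $\conv(T)$ containing all extreme points of $\conv(S \cap \Z^n)$ forces $\conv(T) = \conv(S \cap \Z^n)$, so the vertices of $\conv(T)$ are exactly the integer hull vertices.

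The covering claim is where the main technical difficulty lies. Fix a vertex $v^*$ of the integer hull and a linear functional $c$ uniquely maximized by $v^*$ on $S \cap \Z^n$; after a generic perturbation of $c$ within the (full-dimensional) normal cone of $v^*$ in the integer hull, assume $c$ lies in the interior of the normal cone of a unique LP vertex $v^{(i)}$ of $S$, so $c = \sum_{j \neq i} \mu_j a_j$ with $\mu_j > 0$. Writing $v^* = \sum_k \lambda_k v^{(k)}$ in the barycentric coordinates of $S$, direct computation yields
$$c \cdot v^{(i)} - c \cdot v^* \;=\; \sum_{j \neq i} \mu_j \lambda_j w_j \quad\text{and}\quad a_i \cdot (v^* - v^{(i)}) \;=\; (1 - \lambda_i)\, w_i,$$
so the containment $v^* \in S_i$ is equivalent to $(1 - \lambda_i)\, w_i \leq \Delta - 1$. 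The task therefore reduces to bounding the IP--LP integrality gap $c \cdot v^{(i)} - c \cdot v^*$ by a constant depending only on $\Delta$, and then extracting from that a bound on $(1-\lambda_i)\, w_i$. I expect this to come from a proximity-style argument: the integer points supporting the integer-hull facets incident to $v^*$ satisfy linear systems whose Cramer's-rule solutions have denominators dividing nonzero $n \times n$ subdeterminants of $A$ (which are bounded by $\Delta$), and these constrain how far $v^*$ can drift from $v^{(i)}$ in the $a_i$ direction. The calibration between the proximity constant and the choice $W = \Delta - 1$, which uses the hypothesis $W_{min} \geq \Delta - 1$ to guarantee all $S_i$ are well-defined simplices with the right scaling, is the main obstacle.
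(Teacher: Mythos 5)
Your algorithmic frame (corner sub-simplices around each LP vertex, bounded facet widths via Theorem~\ref{thm::small-simplex}, enumeration via Theorem~\ref{thm::bounded-width-all}, then LP filtering of $\conv(T)$) matches the paper's, but the heart of the theorem is exactly the \emph{covering claim}, and you leave it unproven: the ``proximity-style argument'' is only a hope, and it is where all the hypotheses you have not yet used ($b\in\Z^m$ and $W_{min}\geq \Delta-1$) must enter. The paper proves the covering claim by a lattice argument at each corner cone $\{x: A_i x\leq b^{(i)}\}$: writing an integer point as $u-\sum_{j}\mu_j a'_j$ with $u=A_i^{-1}b^{(i)}$ and $a'_j$ the columns of $A_i^{-1}$, integrality of $b$ forces the slacks $\mu_j$ to be integers, a pigeonhole over the $\det(A_i)\leq\Delta$ cosets of $\Z^n$ in the lattice generated by the columns of $A_i^{-1}$ shows every vertex of the cone's integer hull has $\prod_j(\mu_j+1)\leq\Delta$, hence $\sum_j\mu_j\leq\Delta-1$; the hypothesis $W_{min}\geq\Delta-1$ then guarantees this corner simplex sits inside $S$, and a sandwich argument ($\max$ over the corner simplex $\leq \max$ over $S\cap\Z^n \leq \max$ over the cone's integer points, all equal) transfers the bound to the vertex $v^*$ of $\conv(S\cap\Z^n)$. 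Your sketch addresses neither the lattice/pigeonhole ingredient nor this transfer step (note $v^*$ need not be a vertex of the cone's integer hull, so a proximity bound for cone-hull vertices does not apply to $v^*$ without the sandwich).

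Moreover, your calibration $W=\Delta-1$ appears to be wrong for the shape of corner simplex you chose. In barycentric coordinates the slack of constraint $j$ at $v^*$ is $\lambda_j w_j$, so the provable proximity statement is $\sum_{j\neq i}\lambda_j w_j\leq \Delta-1$, whereas your containment $v^*\in S_i$ demands $w_i\sum_{j\neq i}\lambda_j\leq\Delta-1$; these differ by replacing each $w_j$ by $w_i$, i.e.\ by up to the width ratio, which Theorem~\ref{thm::small-simplex} only bounds by $f(\Delta)$. So membership in your homothet with ratio $(\Delta-1)/w_i$ does not follow, and a vertex sitting at slack $\Delta-1$ along the shortest edge direction would violate it whenever $w_i> \min_{j\neq i}w_j$. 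Two fixes: either use the paper-shaped corner simplex $\conv\{u,\,u-(\Delta-1)a'_1,\ldots,u-(\Delta-1)a'_n\}$ (whose first $n$ facet widths are automatically at most $\Delta-1$), or keep your homothet but enlarge $W$ to $(\Delta-1)f(\Delta)$ --- still a constant, so Theorem~\ref{thm::bounded-width-all} and the polynomial bound survive --- and derive the covering claim from the slack-sum bound. In either case you still need to supply the lattice-coset pigeonhole lemma and the sandwich argument; without them the proposal establishes only the easy complexity bookkeeping, not the correctness of the enumeration.
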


We now present a short proof of our main result.

\begin{proof}[Proof of Theorem~\ref{thm:main}] We appeal to the following result from~\cite{artmann2016note}: there exists a constant $C(\Delta)$ such that if $n > C(\Delta)$ then $A$ has at most $n+1$ rows~\cite[Lemma 7]{artmann2016note}\footnote{This is the place where we need the assumption that all $n\times n$ minors are nonzero.}.

If $n \leq C(\Delta)$ then we use the result from~\cite{cook-hartmann-kannan-mcdiarmid-1992} to enumerate the vertices of the integer hull. Else, we know that $P$ is a simplex by the result cited above. {If $P$ is a single point or the empty set, then the result is easy. Else, $P$ must be full-dimensional.} If the smallest facet width is bounded by $\Delta -2$, then Theorems~\ref{thm::small-simplex} and~\ref{thm::bounded-width-all} imply that {\em all} the integer points in $P$ can be enumerated in polynomial time. If the smallest facet width is greater than or equal to $\Delta -1$, then we use the algorithm from Theorem~\ref{thm::all-vert-final}.\end{proof}

{Let us put our results in some context. Several families of polytopes are known in the literature where the number of vertices of the integer hull grows exponentially in the dimension even with bounded subdeterminants, e.g., bipartite matching polytopes. As one sees in the proof of Theorem~\ref{thm:main}, the assumption of nonzero determinants rules out most of these classical examples because it helps to reduce to the case of the simplex. Nevertheless, in \cite[Theorem 1]{barany1992integer}, B\'ar\'any et al. construct a family of simplices with exponential lower bounds on the number of vertices of their integer hulls. Our work shows that with bounded facet width or bounded subdeterminant assumptions (see Theorems~\ref{thm::bounded-width-all},~\ref{thm::small-simplex} and~\ref{thm::all-vert-final} above), one can get around these examples.}

Section~\ref{sec:proofs} presents the proofs of Theorems~\ref{thm::bounded-width-all},~\ref{thm::small-simplex} and~\ref{thm::all-vert-final}. We end in Section~\ref{sec:conclude} with some future directions.

\section{Proofs of Theorems~\ref{thm::bounded-width-all},~\ref{thm::small-simplex} and~\ref{thm::all-vert-final}}\label{sec:proofs}

%
We collect a few simple but useful facts about width.

\begin{lemma}\label{lem::width-prop} The following are all true.
\begin{enumerate}[(a)]
\item $w(v, S) \leq w(v,S')$ for any $v \in \R^n$ and any two sets $S, S' \subseteq \R^n$ such that $S \subseteq S'$.
\item $w(v, S) = w(v, S + t)$ for any $v, t \in \R^n$ and any $S\subseteq \R^n$.
\item $w(v, \alpha S) = |\alpha| w(v, S)$ for any $v \in \R^n$, any $S\subseteq \R^n$ and any $\alpha \in \R$.
\item Let $A\in \Z^{m\times n}$ and $b\in \R^m$. Let $U \in \Z^{n\times n}$ be a unimodular matrix and define $H = AU$. Consider the two polyhedra $\{x: \in \R^n: Ax \leq b\}$ and $\{y \in \R^n: Hy \leq b\}$ which are related by the unimodular transformation given by $x = Uy$. Then the width $w(a, A,b)$ with respect to any row $a$ of $A$ is equal to the width $w(h, H, b)$ given by the corresponding row $h = U^Ta$ of $H$.
\end{enumerate}
\end{lemma}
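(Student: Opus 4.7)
The plan is to prove all four parts directly from the definition $w(v, S) = \max_{x \in S} v \cdot x - \min_{x \in S} v \cdot x$. Parts (a)--(c) are immediate observations, and part (d) reduces to a single algebraic identity together with the invertibility of $U$; no substantive machinery is required.

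For part (a), the containment $S \subseteq S'$ gives $\max_{x \in S} v \cdot x \leq \max_{x \in S'} v \cdot x$ and $\min_{x \in S} v \cdot x \geq \min_{x \in S'} v \cdot x$, and subtracting these two inequalities yields $w(v,S) \leq w(v,S')$. For part (b), substituting $x = x' + t$ shows that both the maximum and the minimum of $v \cdot x$ over $S + t$ acquire the same additive constant $v \cdot t$ relative to $S$, which cancels in the width. For part (c), the identity $v \cdot (\alpha x) = \alpha (v \cdot x)$ combined with a case split on the sign of $\alpha$ (where the roles of maximum and minimum swap when $\alpha < 0$) produces the factor $|\alpha|$.

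Part (d) is the only step that takes more than one line. The key observation is the identity
\[
a \cdot x \;=\; a^T (Uy) \;=\; (U^T a)^T y \;=\; h \cdot y,
\]
so evaluating the linear functional $a$ at $x$ is the same as evaluating $h = U^T a$ at the corresponding $y$. Since $U$ is unimodular, $\det U = \pm 1$, so $U$ is invertible over $\R$ and the map $y \mapsto Uy$ is a bijection; moreover $Ax = AUy = Hy$, so this bijection restricts to a bijection between $\{y \in \R^n : Hy \leq b\}$ and $\{x \in \R^n : Ax \leq b\}$. Consequently the scalar sets $\{a \cdot x : Ax \leq b\}$ and $\{h \cdot y : Hy \leq b\}$ coincide, and taking $\max - \min$ gives $w(a, A, b) = w(h, H, b)$.

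The only bookkeeping that needs care is the transpose in $h = U^T a$ and the direction of the change of variables $x = Uy$; beyond that, every part is essentially a one-line consequence of the definition, and I do not anticipate any genuinely hard step in this lemma.
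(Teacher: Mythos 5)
Your proof is correct, and it is exactly the routine verification the paper intends (the lemma is stated without proof as a collection of simple facts): parts (a)--(c) follow directly from the definition of width, and for (d) your identity $a\cdot x = (U^T a)\cdot y$ together with the bijection $x = Uy$ between the two polyhedra is the whole argument. No gaps.
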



Let us discuss the integer data assumption here. Typically this is justified by saying that the data is rational and we can scale inequalities to make all the data integer. However, if we wish to impose bounds on the subdeterminants as in this paper, one has to be careful with such scalings. Since the bound is on the subdeterminants of $A$, it is justified to assume that the entries of $A$ are integer. Otherwise, any non-zero bound will be satisfied by every polytope simply by scaling the constraints. However, one may question why $b$ is also assumed to be integer valued. See the final paragraph of Section~\ref{sec:conclude} for some discussion of how this can make a concrete difference. Below, we are careful to impose the integrality assumption on $b$ in our hypotheses only when needed.


\subsection{Proof of Theorem~\ref{thm::bounded-width-all}}

\begin{prop}\label{prop:width-red} Let $Q \subseteq \R^n$ be a simplex {with coefficient matrix $A\in \Z^{m\times n}$} (not necessarily full-dimensional) and let $a\cdot x \leq b$ be a facet defining inequality for $Q$, defining the facet $S$ (also a simplex). Suppose $W_a:= w(a, Q) > 0$. Consider the slice $S' := Q \cap \{x \in \R^n: a\cdot x = b'\}$ for some $b' \in [b-W_a, b]$. Then $S'$ is a translate of $\frac{W_a - (b-b')}{W_a}S$ and consequently $w(v, S') =  \frac{W_a - (b-b')}{W_a}\cdot w(v,S)$ for any $v \in \R^n$. {Furthermore, if $a''\cdot x \leq b''$ defines a facet $S''$ of $Q$ distinct from $S$, then $w(a'', S) = w(a'', Q)$.}
\end{prop}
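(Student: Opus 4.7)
The plan is to exploit the defining combinatorial structure of a simplex: a $k$-dimensional simplex $Q$ has exactly $k+1$ vertices, and every facet is the convex hull of exactly $k$ of them. First I would label the vertices of $Q$ as $v_0, v_1, \ldots, v_k$ so that $S = \conv\{v_1, \ldots, v_k\}$, i.e., $v_0$ is the unique vertex not lying on $S$. Since $\max_{x\in Q} a\cdot x = b$ is attained precisely on $S$ and the minimum is attained at some vertex, we must have $a\cdot v_i = b$ for every $i \geq 1$ and $a\cdot v_0 = b - W_a$.

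For the first assertion, I would write a generic point of $Q$ as a convex combination $x = \sum_{i=0}^k \lambda_i v_i$. Then $a\cdot x = \lambda_0(b-W_a) + (1-\lambda_0)b = b - \lambda_0 W_a$, so the slice condition $a\cdot x = b'$ pins down $\lambda_0 = (b-b')/W_a =: \mu$, while $\lambda_1,\ldots,\lambda_k$ remain free nonnegatives summing to $1-\mu$. This immediately yields $S' = \mu v_0 + (1-\mu)\, S$, which is a translate of $(1-\mu)S = \frac{W_a - (b-b')}{W_a} S$. The identity $w(v,S') = \frac{W_a-(b-b')}{W_a} w(v,S)$ then falls out of parts (b) and (c) of Lemma~\ref{lem::width-prop}.

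For the \emph{furthermore} clause, let $a''\cdot x \leq b''$ define a facet $S'' \neq S$. Again by the simplex structure, $S''$ is the convex hull of all vertices of $Q$ except exactly one, say $v_j$; since $S'' \neq S$ the omitted vertex must satisfy $j \geq 1$, so $v_j$ itself lies in $S$. Hence the minimum $a''\cdot v_j = \min_{x\in Q} a''\cdot x$ is witnessed inside $S$, and the maximum $b'' = \max_{x\in Q}a''\cdot x$ is witnessed by any other $v_i$ with $i \in \{1,\ldots,k\}\setminus\{j\}$, which also lies in $S$. Therefore $w(a'',S) = w(a'',Q)$.

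I do not expect any serious obstacle in this proof; the argument is essentially a careful unpacking of the $k$-simplex structure followed by elementary manipulations of convex combinations. The only subtle point is that the \emph{furthermore} claim implicitly needs $\dim Q \geq 2$ so that $S$ contains a vertex other than $v_j$ to realize the maximum of $a''$; this is harmless in the applications, where the simplices in question have sufficiently large dimension.
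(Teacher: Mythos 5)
Your argument is correct and essentially coincides with the paper's own proof in different notation: the paper likewise singles out the unique vertex $v$ of $Q$ off $S$, writes the remaining vertices as $v+r_i$ with $a\cdot v=b-W_a$, $a\cdot r_i=W_a$ to see that the slice is a translate of $\frac{W_a-(b-b')}{W_a}S$, and for the final claim observes that both the maximum and minimum of $a''$ over $Q$ are attained at vertices lying on $S$, combining this with Lemma~\ref{lem::width-prop}(a). The $\dim Q\ge 2$ caveat you flag is equally implicit in the paper's proof, which invokes a second vertex $v+r_2$ of $S$.
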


\begin{proof} Let $v$ be the vertex of $Q$ that does not lie on $S$ (since $W_a>0$). Let the other vertices of $Q$ be given by $v + r_1, \ldots, v + r_k$, for linearly independent vectors $r_1, \ldots, r_k$. Thus, $a\cdot v = b - W_a$ and $a\cdot r_i = W_a$. Using these relations, one can check that $\{v + \frac{W_a - (b-b')}{W_a}r_i: i=1, \ldots, k\}$ satisfy the equation $a\cdot x = b'$. Thus, they are the vertices of the slice $S'$ and {the first part is established. Let $a''\cdot x \leq b''$ define the facet $S''$ distinct from $S$. Let $v+r_1$ be the vertex of $Q$ not on $S''$. Then we have $w(a'', Q)=a''\cdot (v+r_2)-a''\cdot (v+r_1)$. Since $v+r_1$ and $v+r_2$ are vertices of $Q$ on $S$, this implies $w(a'',S)\geq w(a'',Q)$. We already know that $w(r,S)\leq w(r,Q)$ by Lemma~\ref{lem::width-prop} (a). Thus we are done. } 
\end{proof}


\begin{proof}[Proof of Theorem~\ref{thm::bounded-width-all}] We consider a simple enumeration scheme that considers all slices of $S$ parallel to each of the first $n$ facets. More precisely, for each $i=1, \ldots, n$ and $w \in \{0,\ldots, \lfloor W \rfloor\}$,  consider the slice $S^i_w := \{x \in S: a_i\cdot x = \lfloor b_i \rfloor- w\}$, where $a_i \cdot x \leq b_i$ is the $i$-th facet-defining inequality. Since $A \in \Z^{(n+1)\times n}$, all integer points in $S$ are obtained by considering the sets  $$\bigcap_{i=1}^n S^i_{w_i},$$ where we enumerate through the $O(W)$ choices for $w_i \in \{0,\ldots, \lfloor W \rfloor\}$. This gives $O(W^n)$ sets, which is exponential in $n$. However, we will show that most of these sets are actually empty sets, except for a polynomial sized collection. 

To show this, we implement a simple breadth-first search in the style of standard branch-and-bound algorithms {-- see Algorithm~\ref{alg::bounded-width}}. 




\begin{algorithm}
\caption{Enumerating integer points in a simplex}\label{alg::bounded-width}
\begin{algorithmic}
\STATE{Let the root node be $S$ {at depth $0$}.}
\FOR{$i=0:(n-1)$}
\STATE{Step 1: for each nonempty node $N$ at depth $i$, compute the width $W' = w(a_{i+1}, N)$ of $N$ in the direction of the facet normal $a_{i+1}$ (which also defines a facet for $N$).}
\STATE{Step 2: for all $w \in \{0, \ldots, \lfloor W' \rfloor\}$,  Define the children nodes of $N$ at depth $i+1$ as the sets $N \cap \{x: a_{i+1}\cdot x = \lfloor b_{i+1}\rfloor - w\}$.}
\ENDFOR
{\STATE{Report all the nonempty {nodes without children, i.e. nonempty leaves,} in the tree constructed above, that are single integer points.}}
\end{algorithmic}
\end{algorithm}

For any nonempty node $N$ in the tree at depth $i$ that is not a leaf, let $N \cap \{x: a_{i+1}\cdot x = \lfloor b_{i+1}\rfloor\}$ be called the principal child, i.e., $w=0$  in Step 2. of the ``for loop" in Algorithm~\ref{alg::bounded-width}. Note that any node $N$ has at most $W + 1$ children since the width $W' \leq W$ in step 1. of the ``for loop" in Algorithm~\ref{alg::bounded-width} by Lemma~\ref{lem::width-prop} (a) (since $N \subseteq S$ and the facet width of the first $n$ facets of $S$ is bounded by $W$).

{Let $M$ be a node at depth $i$ that is not principal, and $M'$ be its parent node. Then $w(a_{i+1}, M)\leq \frac{w(a_{i+1},M')-1}{w(a_{i+1},M')}\cdot w(a_{i+1},M')\leq \frac{W-1}{W}\cdot w(a_{i+1},M')$ by Proposition~\ref{prop:width-red} and the fact that $w(a_{i+1},M')\leq W$.}


Now let $N$ be a node at depth $i$ in the tree created by Algorithm~\ref{alg::bounded-width}. If the path from the root node to $N$ has $k$ nodes that are not principal, then the facet width $w(a_{i+1}, N)$ of $N$ in the direction of the facet normal $a_{i+1}$  is at most $W\cdot\left(\frac{W-1}{W}\right)^k$, by {the previous paragraph}, Proposition~\ref{prop:width-red}, and Lemma~\ref{lem::width-prop} (b) and (c). If $K = \lceil \frac{\log_2(W+1)}{\log_2(W)-\log_2(W-1)}\rceil$, then $W\cdot\left(\frac{W-1}{W}\right)^K<1$. This implies that for any nonempty node $N$ at depth $i \geq K$, there are at most $K$ nodes on the path from the root to $N$ that are not principal. Using this, we can bound the number of distinct paths from the root to nonempty nodes at depth $i$. Let us first partition the paths {into different classes} according to the levels at which we see nodes that are not principal. There are at most {$\sum_{j=0}^K{i\choose j}$} classes. Within each class, the only variation comes from the branchings at the nodes that are not principal and we have at most $W+1$ children at any internal node. Thus, we have at most {$\sum_{j=0}^K{i\choose j}\cdot (W+1)^j$} distinct such paths, and therefore, nonempty nodes at level $i$. Summing over $i=1, \ldots, n$ levels, we get at most {$n\sum_{j=0}^K{n\choose j}\cdot (W+1)^j$} nonempty nodes in the tree. Since we only branch at nonempty nodes, when we include the infeasible nodes we can increase the count by a factor of at most $W+1$. Thus the overall bound on the number of nodes enumerated by the tree is {$n\sum_{j=0}^K{n\choose j}\cdot (W+1)^{j+1}$}. Since $W$ is fixed, {so is $K$} and this is a polynomial bound (in $n$) as desired. Since linear programming can be used to test emptiness of any node in the tree, the overall algorithm is also polynomial time.
\end{proof}

{We remark here that the idea of using hyperplanes parallel to the facets for enumeration is reminiscent of the proof technique in Cook et al.~\cite{cook-hartmann-kannan-mcdiarmid-1992}. However, there are two important differences. Cook et al. use hyperplanes parallel to the facets for creating polyhedral regions that they search for vertices of the integer hull; we actually use these hyperplanes as ``dual lattice vectors" and consider intersections of these hyperplanes to define single integer points for enumeration. Secondly, our enumeration above gives {\em all} the integer points in the simplex; Cook et al.'s technique produces only the vertices.}

\subsection{Proof of Theorem~\ref{thm::small-simplex}}
\begin{lemma}\label{lem::bounded-elem-inverse}
Let $H\in \Z^{n\times n}$ be a matrix in Hermite Normal Form with $1<\det(H) \leq \Delta$. Further assume $H$ is in the form:
\begin{align}
\begin{bmatrix}
h_{11} & 0 & 0 & \dots & 0\\
h_{21} & h_{22} & 0 & \dots & 0\\
\vdots & \ddots &\ddots & \dots & 0\\
h_{n1} & h_{n2} & \dots &\dots & h_{nn}
\end{bmatrix}.
\end{align}
Let $1 \leq q \leq n$ be such that $h_{ii}=1$ for $i\leq n-q$ and $h_{ii}>1$ for $i\geq n-q+1$, i.e., $q$ of the diagonal entries are strictly bigger than 1. Also, let $h_{ij}'$ be the entry on $i$th row and $j$th column of $H^{-1}$. Then we have:
\begin{enumerate}[(a)]
\item $q \leq \log_2(\Delta)$.
\item Any column of $H^{-1}$ has at most $q+1$ non zero entries.
    \item {$h'_{ij}$ is an integer multiple of $\frac{1}{\det(H)}$.}
    \item $|h'_{ij}|\leq \Delta^{\log_2(\Delta)-1}(\lceil\log_2(\Delta)\rceil)!$.
    \end{enumerate}
 
\end{lemma}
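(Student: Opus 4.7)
The plan is to dispatch the four items in sequence; parts (a), (b), (c) are quick structural observations, while (d) needs a careful cofactor bound. For (a), the inequality $\det(H) = \prod_i h_{ii} \geq 2^q$ combined with $\det(H) \leq \Delta$ immediately yields $q \leq \log_2(\Delta)$. For (b), I would invoke the HNF reduction property that $0 \leq h_{ij} < h_{ii}$ for all $j < i$: whenever $h_{ii} = 1$, this forces $h_{ij} = 0$ for every $j < i$, so the top $n-q$ rows of $H$ are standard basis vectors. Combined with the zeros above the diagonal, this gives the block structure
\begin{equation*}
H = \begin{pmatrix} I_{n-q} & 0 \\ B & C \end{pmatrix}, \qquad H^{-1} = \begin{pmatrix} I_{n-q} & 0 \\ -C^{-1}B & C^{-1} \end{pmatrix},
\end{equation*}
with $C$ the $q \times q$ lower-triangular block at the bottom right. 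The column-sparsity claim in (b) then reads off directly: columns indexed $j \leq n-q$ have one nonzero in the top and at most $q$ in the bottom (at most $q+1$ total), while columns indexed $j > n-q$ have at most $q$ nonzeros coming from $C^{-1}$. Part (c) is immediate from Cramer's rule, since $\det(H)\cdot H^{-1} = \mathrm{adj}(H)$ has integer entries.

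The substance is part (d). By the block form, every entry of $H^{-1}$ is $0$, $1$, an entry of $C^{-1}$, or an entry of $-C^{-1}B$, so it suffices to bound the latter two. For $(C^{-1})_{ij}$, Cramer's rule gives $(C^{-1})_{ij} = \pm\det(C^{(j,i)})/\det(C)$, where $C^{(j,i)}$ denotes $C$ with row $j$ and column $i$ removed. In the permutation expansion of this $(q-1) \times (q-1)$ determinant, any nonzero term corresponds to a permutation $\sigma$ of $\{1,\ldots,q\}$ with $\sigma(j) = i$ and $\sigma(r) \leq r$ for all $r \neq j$; the latter forces $\sigma(r) = r$ for every $r < j$, so at most $(q-j)!$ permutations survive. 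The crucial observation is that HNF reduction gives $|C_{r,c}| \leq C_{rr}$ for $c \leq r$, so every surviving term has absolute value at most $\prod_{r \neq j} C_{rr} = \det(C)/C_{jj}$. Dividing by $\det(C)$ yields $|(C^{-1})_{ij}| \leq (q-j)!/C_{jj} \leq (q-1)!/2$. A parallel estimate for $(-C^{-1}B)_{ij}$, using $|B_{kj}| < C_{kk}$ (again from HNF reduction) to bound each of the at most $q$ terms of the sum by $(q-k)!$, gives a total of at most $\sum_{k=1}^q (q-k)! \leq q!$. Combining with $q \leq \lceil\log_2(\Delta)\rceil$ from part (a), every entry of $H^{-1}$ is at most $(\lceil\log_2(\Delta)\rceil)!$, which is $\leq \Delta^{\log_2(\Delta)-1}(\lceil\log_2(\Delta)\rceil)!$ since $\Delta^{\log_2(\Delta)-1} \geq 1$ for $\Delta \geq 2$.

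The main obstacle I anticipate is the cofactor bound in (d). A naive permutation expansion with entries of $C$ bounded by $\Delta$ yields only $|\det(C^{(j,i)})| \leq (q-1)!\,\Delta^{q-1}$, which is too weak to cancel against $\det(C)$ and meet the stated bound. What saves the argument is exploiting the HNF reduction to bound off-diagonal entries by the corresponding diagonal entry in the same row: this collapses the cofactor bound to $(q-j)!\det(C)/C_{jj}$ and produces a clean $\Delta$-free estimate after division by $\det(C)$.
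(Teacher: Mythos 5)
Your proof is correct. Parts (a)--(c) run exactly as in the paper: the bound $2^q\le\prod_i h_{ii}\le\Delta$, the block decomposition with an identity top-left block (plus lower triangularity of $H^{-1}$), and the adjugate/Cramer argument for the $\frac{1}{\det(H)}$ claim. Part (d), however, takes a genuinely different route. The paper applies Cramer's rule to the bottom-right block and bounds the relevant $(q-1)\times(q-1)$ cofactor crudely, using only $0\le h_{ij}\le\Delta$, which yields a bound of the form $\Delta^{\log_2(\Delta)-2}(\lfloor\log_2(\Delta)\rfloor)!$ for entries with $i,j\ge n-q+1$; it then treats the mixed entries ($i\ge n-q+1$, $j\le n-q$) via the orthogonality relation $h'_{ij}+\sum_{k}h'_{ik}h_{kj}=0$ coming from $H^{-1}H=I$. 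You instead exploit the HNF row-dominance property $0\le h_{ij}<h_{ii}$ to bound every surviving permutation term by $\det(C)/C_{jj}$, so the determinant in the denominator cancels, giving $|(C^{-1})_{ij}|\le (q-j)!/C_{jj}$, and you bound the mixed block directly as $-C^{-1}B$ with $|B_{kj}|<C_{kk}$, arriving at the $\Delta$-free bound $q!\le(\lceil\log_2(\Delta)\rceil)!$, which is strictly sharper than the stated estimate (and implies it since $\Delta^{\log_2(\Delta)-1}\ge 1$ for $\Delta\ge 2$). One small correction to your closing commentary: the ``naive'' bound $|\det(C^{(j,i)})|\le(q-1)!\,\Delta^{q-1}$ is not actually too weak --- after dividing by $\det(C)\ge 2$ and using $q\le\log_2(\Delta)$ it already meets the lemma's stated bound, and this coarse estimate is essentially what the paper uses; your refinement is thus not needed for the claim, but it is a clean strengthening.
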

\begin{proof}

Property (a) follows from the fact that the product of the diagonal entries is at most $\Delta$ and thus $2^q \leq \prod_i h_{ii} \leq \Delta$.

Since $h_{ii} = 1$ for $i \leq n- q$ and $H$ is in Hermite Normal Form, $h_{ij}=0$ for $j<i\leq n-q$. Thus, we may write $H$ in the following form
\begin{align}
    \begin{bmatrix}
    I_{(n-q)\times(n-q)} & 0\\
    ~\\
    \widetilde{H} & \widehat{H}
    \end{bmatrix},
\end{align}
where $\widehat{H}$ is a $q\times q$ lower triangular matrix, and $I_{(n-q)\times(n-q)}$ is an $(n-q)\times(n-q)$ identity matrix. This implies the principal $(n-q)\times(n-q)$ minor of $H^{-1}$ must also be $I_{(n-q)\times(n-q)}$. Since $H^{-1}$ must also be lower triangular, $h'_{ij}=0$ for $1\leq i < j \leq n$. From these observations, property (b) follows.

Property (c) follows from Cramer's Rule or the Laplace expansion formula for the inverse {and the fact that $H$ is an integer matrix.}

We next consider Property (d) Since $H^{-1}H$ is an identity matrix, $1\geq h_{ii}' = \frac{1}{h_{ii}}\geq \frac{1}{\Delta}$ for all $i$. We already observed above that $h'_{ij}=0$ for $1\leq i < j \leq n$ and for $i, j \in \{1, \ldots, n-q\}$ except when $i=j$. 

Now consider $i,j\geq n-q+1$. We remove the $i$th column and $j$th row of $H$ to get a matrix $H_{ji}$ and write it as 
\begin{align}
    \begin{bmatrix}
    I_{(n-q)\times(n-q)} & 0\\
    ~\\
    \widetilde{H}_j & \widehat{H}_{ji}
    \end{bmatrix}.
\end{align}
By Cramer's rule, $|h'_{ij}| = \frac{|\det(H_{ji})|}{\Delta}$. Also we have $\det(H_{ji})$ is equal to the determinant of the following matrix
\begin{align}
    \begin{bmatrix}
    I_{(n-q)\times(n-q)} & 0\\
    ~\\
    {\bf 0}_{(n-q)\times q} & \widehat{H}_{ji}
    \end{bmatrix}.
\end{align}
By the definition of the Hermite Normal Form, we have $0\leq h_{ij}\leq \Delta$ for $i>n-q$. Since $q\leq \log_2(\Delta)$, $|\det(\widehat{H}_{ji})|\leq \Delta^{\log_2(\Delta)-1}(\lfloor\log_2(\Delta)\rfloor)!$ {by the Laplace expansion formula of the determinant}. Thus $|h'_{ij}| = \frac{|\det(H_{ji})|}{\Delta}\leq  \frac{|\det(\widehat{H}_{ji})|}{\Delta}\leq \Delta^{\log_2(\Delta)-2}(\lfloor\log_2(\Delta)\rfloor)!$ for $i,j\geq n-q+1$. 

Finally, consider $i\geq n-q+1$ and $j\leq n-q$. Since $H^{-1}H$ is an identity matrix, the inner product of the $i$-th row of $H^{-1}$ and the $j$-th column of $H$ must be $0$. In other words, we have $h'_{ij} + \sum_{k = n-q+1}^nh'_{ik}h_{kj}=0$. Thus $|h'_{ij}| =|\sum_{k = n-q+1}^nh'_{ik}h_{kj}|\leq \Delta^{\log_2(\Delta)-1}(\lceil\log_2(\Delta)\rceil)!$.
\end{proof}

We will also need the following Lemma from \cite{artmann2016note}.
\begin{lemma}\label{lem::bounded-elem}
Given a simplex described by $Ax\leq b$, where $A\in \Z^{(n+1)\times n}$ {is in Hermite Normal Form}, $b\in \R^{n+1}$, and $\Delta_A \leq \Delta$, then the {absolute values of the} entries in $A$ are bounded by a function $g(\Delta)$ which only depends
on $\Delta$.
\end{lemma}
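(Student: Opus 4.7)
The plan is to bound the entries of $A$ by treating separately the first $n$ rows (the HNF matrix $H$) and the last row $a_{n+1}$, leveraging the fact that HNF already bundles all dimension-dependent behaviour into a bounded-size ``non-trivial core.''

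The entries of $H$ are immediate from the HNF property: each diagonal entry satisfies $h_{ii}\in[1,\Delta]$ since $\prod_i h_{ii}=|\det H|\leq\Delta$, and the HNF reduction condition forces $h_{ij}\in[0,h_{jj})\subseteq[0,\Delta)$ for $i>j$ and $h_{ij}=0$ for $i<j$. Hence every entry of $H$ is bounded in absolute value by $\Delta$.

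For $a_{n+1}$, I would use the bound $|\det(A_i)|\leq\Delta$ on every $n\times n$ subdeterminant. Cofactor expansion of $\det(A_i)$ along the row $a_{n+1}$ gives, for each $i\in\{1,\ldots,n\}$, a linear equation $\sum_{j=1}^n(-1)^{n+j}\,a_{n+1,j}\,\det(H_{(-i,-j)})=\det(A_i)$, and together these $n$ equations form an invertible linear system uniquely determining $a_{n+1}$. The key structural input is Lemma~\ref{lem::bounded-elem-inverse}(a): at most $q\leq\log_2\Delta$ of the diagonal entries $h_{jj}$ exceed $1$, splitting the column indices into a ``trivial'' set $\bar J=\{j:h_{jj}=1\}$ and a ``non-trivial'' set $J=\{j:h_{jj}>1\}$ with $|J|\leq\log_2\Delta$. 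For $j\in\bar J$, HNF forces column $j$ of $H$ to equal the standard basis vector $e_j$, so cofactor expansion of $\det(A_j)$ along column $j$ collapses to the single term $\pm a_{n+1,j}\cdot\det(H_{(-j,-j)})$; since $\det(H_{(-j,-j)})=\det(H)/h_{jj}=\det(H)$, we obtain $|a_{n+1,j}|\leq|\det(A_j)|/|\det(H)|\leq\Delta$.

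For $j\in J$ I plan to reduce to a problem in dimension $q$ via the unimodular change of variables $y_j=x_j$ for $j\in J$ and $y_j=b_j-h_j\cdot x$ for $j\in\bar J$. Under this transformation the constraint matrix acquires a block structure: an identity-like $\bar J$-block coming from the reduced trivial rows, together with a $(q+1)\times q$ ``core'' block consisting of $H_{J,J}$ stacked with a modified last row $\beta$, whose $k$-th entry is $\beta_k=a_{n+1,k}-\sum_{j\in\bar J,\,j>k}a_{n+1,j}h_{jk}$. Since unimodular column operations preserve absolute values of subdeterminants, the bound $\leq\Delta$ transfers directly to the core block; and because the core has dimension $q\leq\log_2\Delta$ bounded in $\Delta$, the analogous Cramer system applied to the core gives $\|\beta\|_\infty\leq\poly(\Delta,\log\Delta)$. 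The main obstacle will be carrying out this reduction carefully so that (i) the subdeterminant bound transfers cleanly to the $(q+1)\times q$ core block and (ii) the HNF normalization in the lemma is understood as encompassing this reduced representation, in which the bounded entries are precisely those of $H$ together with those of $\beta$, all uniformly bounded by a function of $\Delta$ alone.
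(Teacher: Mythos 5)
The paper does not prove this lemma at all --- it is imported from \cite{artmann2016note} --- so your proposal is judged on its own terms rather than against an in-paper argument. Your overall strategy (relate the last row to the bounded $n\times n$ minors via cofactor expansions, and exploit that at most $q\le\log_2\Delta$ diagonal entries of $H$ exceed $1$) is the right one, and the bound on the first $n$ rows is fine (under either normalization every entry of $H$ is at most $\Delta$). But two steps fail as written. First, you use the wrong HNF normalization. For the lower-triangular, column-style HNF used here --- see the proof of Lemma~\ref{lem::bounded-elem-inverse}, which deduces $h_{ij}=0$ for $j<i\le n-q$ --- the reduction is row-wise, $0\le h_{ij}<h_{ii}$ for $j<i$, so a unit pivot $h_{jj}=1$ forces \emph{row} $j$ of $H$ to equal $e_j^T$, not column $j$. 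Your condition $0\le h_{ij}<h_{jj}$ is not what the HNF provides (and cannot in general be arranged by unimodular column operations while preserving triangularity), so for $j\in\bar J$ the column you expand along may contain up to $q$ additional nonzero entries and the expansion of $\det(A_j)$ does not collapse to a single term.

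Second, the case $j\in J$ is not actually closed. Even granting the unimodularity of your change of variables and the transfer of the subdeterminant bound to the $(q+1)\times q$ core, what you bound is $\beta_k=a_{n+1,k}-\sum_{j\in\bar J,\,j>k}a_{n+1,j}h_{jk}$, not $a_{n+1,k}$ itself; unwinding that sum reintroduces up to $n-q$ terms under your convention, so no bound depending on $\Delta$ alone follows, and your proposed escape --- reading the lemma as a statement about the ``reduced representation'' --- changes the statement, which is not available: Theorem~\ref{thm::bounded-ratio} needs the entries of the actual last row $h_{n+1}$ of the HNF matrix bounded by $g(\Delta)$. The good news is that with the correct normalization your own setup finishes the proof with no case split. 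Writing the $n$ cofactor expansions (up to signs) as $a_{n+1}^T\,\mathrm{adj}(H)=d^T$ with $|d_i|=|\det(A_i)|\le\Delta$, and using $\mathrm{adj}(H)=\det(H)\,H^{-1}$, gives $a_{n+1}^T=\frac{1}{\det(H)}\,d^T H$, i.e., $a_{n+1,j}=\frac{1}{\det(H)}\sum_i d_i\,h_{ij}$. Since every row of $H$ with unit pivot is a unit vector, column $j$ of $H$ has at most $q+1\le\log_2\Delta+1$ nonzero entries, each of absolute value at most $\Delta$, so $|a_{n+1,j}|\le(\log_2\Delta+1)\Delta^2$ for every $j$ --- a bound in terms of $\Delta$ only, with no core reduction needed.
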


\begin{prop}\label{prop::constant-ratio}
Let $A\in\R^{(n+1)\times n}$ and $b\in\R^{n+1}$, such that $\{x:Ax\leq b\}$ is a full dimensional simplex. 
Let $\hat{A}$ be the first $n$ rows of $A$, $a_i$ be the $i$-th column of $A^T$ and $a_i'$ be the $i$-th column of $\hat{A}^{-1}$. Then we have $\frac{w(a_i, A, b)}{w(a_j, A, b)} =\frac{a_{n+1}^Ta_j'}{a_{n+1}^Ta_i'}$ for $i,j\leq n$.
\end{prop}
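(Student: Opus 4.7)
The plan is to identify the vertices of the simplex explicitly, compute the width $w(a_i, A, b)$ as a difference of two coordinates $a_i \cdot x$ at vertices, and then express the displacement between the opposite vertex and the ``top'' vertex in terms of the columns of $\hat A^{-1}$. Once this is done, applying $a_{n+1}^T$ will produce a quantity that does not depend on $i$, and the claim will follow by taking ratios.

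More concretely, since the simplex is full-dimensional, any $n$ of the $n+1$ facet normals must be linearly independent (otherwise the corresponding $n$ facet hyperplanes would be parallel, contradicting full-dimensionality), so $\hat{A}$ is invertible. For each $k\in\{1,\ldots,n+1\}$ let $v_k$ denote the unique vertex obtained by making all constraints except the $k$-th tight, i.e.\ $a_j^T v_k = b_j$ for all $j\neq k$. In particular $v_{n+1}$ satisfies $\hat{A} v_{n+1} = \hat b$, where $\hat b$ denotes the first $n$ entries of $b$. For any $i \le n$, the maximum of $a_i \cdot x$ on the simplex is $b_i$ (attained on facet $i$, e.g.\ at $v_{n+1}$), while the minimum is attained at the opposite vertex $v_i$. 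Hence
\begin{equation*}
w(a_i, A, b) \;=\; b_i - a_i^T v_i.
\end{equation*}

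Next I would compute the difference $v_{n+1} - v_i$ by applying $\hat{A}$: for any row index $j\leq n$ with $j\neq i$, both $v_{n+1}$ and $v_i$ satisfy $a_j^T x = b_j$, so the $j$-th coordinate of $\hat{A}(v_{n+1} - v_i)$ is zero, while the $i$-th coordinate equals $b_i - a_i^T v_i = w(a_i, A, b)$. Thus $\hat{A}(v_{n+1}-v_i) = w(a_i, A, b)\, e_i$, and multiplying by $\hat{A}^{-1}$ gives the clean identity
\begin{equation*}
v_{n+1} - v_i \;=\; w(a_i, A, b)\, a_i'.
\end{equation*}

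Finally I would apply $a_{n+1}^T$ to both sides. Since $v_i$ lies on the $(n+1)$-th facet, $a_{n+1}^T v_i = b_{n+1}$, so
\begin{equation*}
w(a_i, A, b)\cdot a_{n+1}^T a_i' \;=\; a_{n+1}^T v_{n+1} - b_{n+1},
\end{equation*}
and the right-hand side is a single scalar that does not depend on $i$. Applying the same identity with $j$ in place of $i$ and dividing yields the claim. The only part that requires a tiny bit of care is noting that $a_{n+1}^T a_i'$ is nonzero for $i\leq n$; this follows because otherwise the displayed equality forces $a_{n+1}^T v_{n+1} = b_{n+1}$, meaning $v_{n+1}$ lies on the $(n+1)$-th facet, contradicting full-dimensionality of the simplex. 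I do not expect any conceptual obstacle; the argument is essentially a direct computation using the structure of a simplex with one more facet than its dimension.
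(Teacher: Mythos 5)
Your proposal is correct and follows essentially the same route as the paper: both identify the vertex $v_{n+1}=\hat{A}^{-1}\hat b$, write the vertex opposite facet $i$ as $v_{n+1}-w(a_i,A,b)\,a_i'$, and substitute into the $(n+1)$-th constraint $a_{n+1}\cdot x=b_{n+1}$ to get $w(a_i,A,b)\,a_{n+1}^Ta_i' = a_{n+1}^Tv_{n+1}-b_{n+1}$, an $i$-independent quantity whose ratio form is the claim. Your derivation of $v_{n+1}-v_i=w(a_i,A,b)\,a_i'$ via $\hat A(v_{n+1}-v_i)=w(a_i,A,b)\,e_i$ just makes explicit what the paper asserts ``by definition of $w$,'' so there is no substantive difference.
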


\begin{proof}

Let $b'$ be the first $n$ elements of $b$. Consider the vertex $\hat{A}^{-1}b'$ of the simplex. By definition of $w(a_i, A, b)$, $\hat{A}^{-1}b'-w(a_i, A, b)\cdot a'_i$ is the vertex of the simplex that does not lie on the facet given by $a_i\cdot x = b_{i}$. This vertex must lie on the facet given by $a_{n+1}\cdot x = b_{n+1}$. Thus, we have $ a_{n+1}^T(\hat{A}^{-1}b'-w(a_i, A, b)\cdot a'_i) = b_{n+1}$, which implies $w(a_i, A, b) = \frac{b_{n+1}-a_{n+1}^T\hat{A}^{-1}b'}{-a_{n+1}^Ta_i'}$. 
Similarly, we have  $w(a_j, A, b) = \frac{b_{n+1}-a_{n+1}^T\hat{A}^{-1}b'}{-a_{n+1}^Ta_j'}$. Thus $\frac{w(a_i, A, b)}{w(a_j, A, b)} =\frac{a_{n+1}^Ta_j'}{a_{n+1}^Ta_i'}$. 
\end{proof}

\begin{theorem}\label{thm::bounded-ratio}
 Given a full-dimensional simplex described by $Ax\leq b$, where $A\in \Z^{(n+1)\times n}$, $b\in \R^{n+1}$, and $\Delta_A \leq \Delta$, then $\frac{w(a_i, A, b)}{w(a_j, A, b)}\leq g(\Delta)\Delta^{\log_2(\Delta)}(\lceil\log_2(\Delta)\rceil+1)!$, where $g$ is the function from Lemma \ref{lem::bounded-elem}.
\end{theorem}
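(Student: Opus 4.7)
The plan is to first reduce to the case where $\hat{A}$ (the first $n$ rows of $A$) is in Hermite Normal Form, via a unimodular column transformation. Concretely, pick $U\in\Z^{n\times n}$ unimodular so that $\hat{A}U$ is in Hermite Normal Form, and pass to the equivalent polyhedron $\{y:AUy\leq b\}$. By Lemma~\ref{lem::width-prop}(d), every facet width is preserved under this change of variables, so the ratio we want to bound is unchanged; and by Lemma~\ref{lem::bounded-elem}, after this reduction every entry of $A$, including the transformed last row $a_{n+1}$, has absolute value at most $g(\Delta)$.

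Next, I would apply Proposition~\ref{prop::constant-ratio} to rewrite
\[
\frac{w(a_i,A,b)}{w(a_j,A,b)} \;=\; \frac{a_{n+1}^{T}a'_j}{a_{n+1}^{T}a'_i},
\]
where $a'_k$ denotes the $k$-th column of $\hat{A}^{-1}$. The case $\det(\hat{A})=1$ forces $\hat{A}=I$ (since $\hat{A}$ is lower triangular with positive diagonal in HNF and unit determinant), and the bound then follows immediately because $a'_k=e_k$ and $a_{n+1}$ has integer entries bounded by $g(\Delta)$. So assume $\det(\hat{A})>1$. Lemma~\ref{lem::bounded-elem-inverse}(b) tells us that $a'_j$ has at most $\lceil\log_2(\Delta)\rceil + 1$ nonzero entries, and Lemma~\ref{lem::bounded-elem-inverse}(d) bounds each such entry by $\Delta^{\log_2(\Delta)-1}(\lceil\log_2(\Delta)\rceil)!$. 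Combining with the entrywise bound on $a_{n+1}$ yields
\[
|a_{n+1}^{T}a'_j| \;\leq\; g(\Delta)\cdot\Delta^{\log_2(\Delta)-1}(\lceil\log_2(\Delta)\rceil + 1)!.
\]

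For the denominator, Lemma~\ref{lem::bounded-elem-inverse}(c) says every entry of $a'_i$ is an integer multiple of $1/\det(\hat{A})$, so since $a_{n+1}$ is integer, $a_{n+1}^{T}a'_i$ is also an integer multiple of $1/\det(\hat{A})$. Full-dimensionality of the simplex forces $w(a_i,A,b)$ to be finite and positive, which by the explicit formula derived inside the proof of Proposition~\ref{prop::constant-ratio} forces $a_{n+1}^{T}a'_i\neq 0$. Hence $|a_{n+1}^{T}a'_i|\geq 1/\det(\hat{A})\geq 1/\Delta$, and dividing the two bounds yields the claimed ratio bound $g(\Delta)\cdot\Delta^{\log_2(\Delta)}(\lceil\log_2(\Delta)\rceil+1)!$. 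The main subtlety, in my view, is this last lower-bound step: one must rule out $a_{n+1}^{T}a'_i = 0$ (which relies on full-dimensionality) and then exploit the fact that $a_{n+1}^{T}a'_i$ is a non-zero integer multiple of $1/\det(\hat{A})$; everything else is essentially plugging in the already-established structural bounds from Lemma~\ref{lem::bounded-elem-inverse}.
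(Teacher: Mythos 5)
Your proposal is correct and follows essentially the same route as the paper: reduce to Hermite Normal Form using Lemma~\ref{lem::width-prop}(d), express the width ratio via Proposition~\ref{prop::constant-ratio}, bound the numerator with Lemma~\ref{lem::bounded-elem} and Lemma~\ref{lem::bounded-elem-inverse}(b),(d), and lower-bound the denominator by $1/\Delta$ via Lemma~\ref{lem::bounded-elem-inverse}(c). In fact you are slightly more careful than the paper in two spots it glosses over, namely the degenerate case $\det(\hat{A})=1$ (where Lemma~\ref{lem::bounded-elem-inverse} does not apply) and the justification that $a_{n+1}^{T}a'_i\neq 0$ from full-dimensionality.
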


\begin{proof}
By Lemma~\ref{lem::width-prop} (d), it suffices to prove the result for the simplex $\{y : Hy\leq b\}$ where $H$ is the Hermite Normal Form of $A$. 
By permuting rows of $A$ while computing the Hermite Normal Form, we may assume there exists $1 \leq q \leq n$ such that $h_{ii}=1$ for $i\leq n-q$ and $h_{ii}>1$ for $n-q< i\leq n$, and so, we are in the setting of Lemma~\ref{lem::bounded-elem-inverse}. Thus, $q\leq \log_2(\Delta)$ by Lemma~\ref{lem::bounded-elem-inverse} (a). Let $\widehat{H}$ be the first $n$ rows of $H$. Let $h'_i$ be the $i$th column of $\widehat{H}^{-1}$ and $h_i$ be the $i$th column of $H^T$. Then by Proposition \ref{prop::constant-ratio}, we have that $\frac{w(h_i, H, b)}{w(h_j, H, b)}=\frac{h_{n+1}^Th'_j}{h_{n+1}^Th'_i}$. By Lemma \ref{lem::bounded-elem-inverse} (b), we know that $h'_i$ and $h'_j$ only has at most $q+1$ non-zero elements. Also, by Lemma \ref{lem::bounded-elem}, the entries of $h_{n+1}$ is bounded by $g(\Delta)$. {Combined with Lemma \ref{lem::bounded-elem-inverse} (d), this implies $h_{n+1}^Th'_j\leq g(\Delta)\Delta^{\log_2(\Delta)-1}(\lceil\log_2(\Delta)\rceil+1)!$. Since $h_{n+1}^Th'_i >0$ and all entries of $h'_i$ are integer multiples of $\det(\widehat H)$ by Lemma \ref{lem::bounded-elem-inverse} (c), we must have $h_{n+1}^Th'_i\geq \frac{1}{\det(\widehat H)} \geq \frac{1}{\Delta}$.} Therefore,  $\frac{w(h_i, H, b)}{w(h_j, H, b)}\leq g(\Delta)\Delta^{\log_2(\Delta)}(\lceil\log_2(\Delta)\rceil+1)!$
\end{proof}

\begin{proof}[Proof of Theorem~\ref{thm::small-simplex}]Theorem~\ref{thm::bounded-ratio} implies Theorem~\ref{thm::small-simplex}. \end{proof}
\subsection{Proof of Theorem~\ref{thm::all-vert-final}}
{We first give a lemma that links the inner and outer descriptions of a simplicial cone and the integers points in it.}
\begin{lemma}\label{lem::rep-of-int}
Let $C$ be {a translation of a} simplicial cone  defined by $Ax\leq b$ where $A\in \Z^{n\times n}$ and $b\in \Z^n$. Also, let $a'_1$, $a'_2,\ldots, a'_n$ be the columns of $A^{-1}$, and $u:=A^{-1}b$ be the vertex of $C$. Then for any $v\in C\cap \Z^n$, there exists $\mu := (\mu_1, \ldots, \mu_n)$, where $\mu_i\in \Z$ and $\mu_i\geq 0$ for $1\leq i\leq n$, such that $v = u - \sum_{i=1}^n\mu_ia'_i$. 
\end{lemma}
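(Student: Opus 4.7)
The plan is to leverage the standard duality between the inner (vertex plus conic combination of extreme rays) and outer (system of halfspaces) descriptions of a simplicial cone, and then use integrality of $A$, $b$, and $v$ to upgrade the real nonnegative coefficients to nonnegative integers.

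First I would observe that since $A$ is invertible and $A(-a'_i) = -e_i \leq \mathbf{0}$, each $-a'_i$ lies in the recession cone $\rec(C) = \{r : Ar \leq \mathbf{0}\}$, and in fact $-a'_1, \ldots, -a'_n$ are exactly the extreme rays of this simplicial cone. Consequently, the standard inner description of $C$ as a translate of its recession cone by its vertex $u = A^{-1}b$ gives that every $v \in C$ admits a representation
$$v = u - \sum_{i=1}^n \mu_i a'_i, \qquad \mu_i \in \R,\ \mu_i \geq 0.$$
This handles the existence and nonnegativity of the $\mu_i$ as real numbers.

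The key step is then to solve explicitly for $\mu$. Applying $A$ to both sides and using $A a'_i = e_i$ and $Au = b$ yields
$$Av = Au - \sum_{i=1}^n \mu_i A a'_i = b - \mu,$$
so that $\mu = b - Av$. Since $A \in \Z^{n \times n}$, $b \in \Z^n$, and $v \in \Z^n$ by assumption, the right-hand side is an integer vector, and thus every $\mu_i$ is a nonnegative integer, which is the desired conclusion.

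There is no real obstacle here; the only thing worth being careful about is the sign convention (the $a'_i$ appear with a minus sign in the representation because the recession cone is $\{r : Ar \leq \mathbf{0}\}$ rather than $\{r : Ar \geq \mathbf{0}\}$), and the fact that the hypothesis ``$C$ is a translation of a simplicial cone'' ensures $u$ is a genuine vertex so the inner description is valid. With those in place, the argument is a two-line computation.
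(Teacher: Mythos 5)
Your proof is correct and follows essentially the same route as the paper: both write $v = u - \sum_i \mu_i a'_i$, apply $A$ to get $\mu = b - Av$, and deduce nonnegativity from $Av \leq b$ and integrality from $A, b, v$ being integral. The only cosmetic difference is that you invoke the standard inner description of a simplicial cone for the existence of nonnegative real $\mu_i$, whereas the paper derives it directly from the same computation $Ax = b - \mu$.
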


\begin{proof}
Let $C' := \{x: x = u - \sum_{i=1}^n\mu_i'a'_i, \mbox{ where } \mu_i'\geq 0\mbox{ for } 1\leq i \leq n\}$. Consider any $x = u - \sum_{i=1}^n\mu_i'a'_i, \mbox{ where } \mu_i' \in \R \mbox{ for } 1\leq i \leq n\in C'$. We have 
\begin{align}
    Ax = A(u - \sum_{i=1}^n\mu_i'a'_i) = b - 
    \begin{pmatrix}
    \mu_1'\\
    \mu_2'\\
    \dots\\
    \mu_n'
    \end{pmatrix}.
\end{align}
Thus, $x \in C$, i.e., $Ax \leq b$ if and only if $\mu' \geq 0$. Therefore, $C = C'$. For any $v\in C\cap \Z^n$, express $v = u - \sum_{i=1}^n\mu_ia'_i$, where $\mu_i\geq 0$ for $1\leq i \leq n$. Then the same calculation as above yields $Av = b - \mu$. 
$Av\in \Z^n$ since $v\in \Z^n$ and $A\in \Z^{n\times n}$. Since $b\in \Z^n$, this implies that $\mu\in \Z^n$, and the proof is finished.
\end{proof}

\begin{lemma}\label{lem::rep-of-vert}
With the same notation as Lemma \ref{lem::rep-of-int}, let $\det(A) = \Delta$, and $X$ be the convex hull of the set $C\cap \Z^n$. If $v = u - \sum_{i=1}^n\mu_ia'_i$, where $\mu_i\geq 0$ for $1\leq i \leq n$ is a vertex of $X$, then we have $\prod_{i=1}^{n}(\mu_i+1)\leq \Delta$.
\end{lemma}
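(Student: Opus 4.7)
The plan is to argue by contradiction via pigeonhole on the finite abelian group $\Z^n/A\Z^n$, whose order is $|\det A|=\Delta$.

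First I would sharpen Lemma~\ref{lem::rep-of-int} to an exact \emph{bijection}. For any $\lambda\in\Z^n$, the point $u-\sum_i\lambda_i a'_i = A^{-1}(b-\lambda)$ lies in $\Z^n$ iff $b-\lambda\in A\Z^n$, i.e.\ $\lambda\equiv b\pmod{A\Z^n}$. Together with Lemma~\ref{lem::rep-of-int}, this shows that $C\cap\Z^n$ is in bijection with $\{\lambda\in\Z^n:\lambda_i\geq 0\text{ for all }i,\ \lambda\equiv b\pmod{A\Z^n}\}$ via $\lambda\mapsto u-\sum_i\lambda_i a'_i$. Applied to the vertex $v=u-\sum_i\mu_i a'_i$ (which is integer since it lies in $C\cap\Z^n$), this gives $\mu\equiv b\pmod{A\Z^n}$.

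Next, assume for contradiction that $\prod_i(\mu_i+1)>\Delta$. Consider the ``box'' of integer vectors $\Lambda_\mu:=\prod_i\{0,1,\ldots,\mu_i\}$; its cardinality $\prod_i(\mu_i+1)$ strictly exceeds $|\Z^n/A\Z^n|=\Delta$, so by pigeonhole there exist distinct $\lambda,\lambda'\in\Lambda_\mu$ with $\lambda\equiv\lambda'\pmod{A\Z^n}$. Set $\eta:=\lambda-\lambda'$: then $\eta$ is a nonzero element of $A\Z^n$ and its coordinates satisfy $\eta_i\in\{-\mu_i,\ldots,\mu_i\}$, so both $\mu+\eta$ and $\mu-\eta$ are nonnegative integer vectors.

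Finally, I would use $\eta$ to produce an explicit nontrivial midpoint representation of $v$. Define
\[
w_1:=u-\sum_{i=1}^n(\mu_i+\eta_i)\,a'_i,\qquad w_2:=u-\sum_{i=1}^n(\mu_i-\eta_i)\,a'_i.
\]
Since $\mu\pm\eta$ are nonnegative integer vectors and $\mu\pm\eta\equiv\mu\equiv b\pmod{A\Z^n}$ (using $\eta\in A\Z^n$), the bijection above yields $w_1,w_2\in C\cap\Z^n$. They are distinct because $\eta\neq 0$ and the columns $a'_1,\ldots,a'_n$ of $A^{-1}$ are linearly independent, yet $v=\tfrac12(w_1+w_2)$ by construction. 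This contradicts $v$ being a vertex of $X=\conv(C\cap\Z^n)$, completing the proof. The only delicate point is arranging the pigeonhole so that the two witnesses $w_1,w_2$ are simultaneously integer (handled by the congruence $\mu\equiv b$) and lie in $C$ (handled by the range constraint on $\eta$); once this bookkeeping is in place, no serious obstacle is expected.
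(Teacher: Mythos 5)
Your proof is correct and is essentially the paper's argument: both rest on a pigeonhole over a group of order $\Delta$ applied to the box $\prod_i\{0,\ldots,\mu_i\}$, producing two distinct points of $C\cap\Z^n$ whose midpoint is $v$, contradicting vertexhood. The only cosmetic difference is that you phrase the pigeonhole in the coefficient lattice $\Z^n/A\Z^n$, while the paper uses the isomorphic quotient $L/\Z^n$ for the lattice $L$ generated by the columns of $A^{-1}$.
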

\begin{proof}
We will prove this by contradiction. Assume there exists a vertex $v$ of $X$ such that $v = u - \sum_{i=1}^n\mu_ia'_i$, where $\mu_i\geq 0$ for $1\leq i \leq n$ and $\prod_{i=1}^{n}(\mu_i+1)> \Delta$. 
Due to the fact that $\det(A^{-1})=\Delta^{-1}$, the columns of $A^{-1}$ define a lattice $L$ such that $\Z^n\subseteq L$, and $|L/\Z^n| = \Delta$, i.e., there are $\Delta$ cosets with respect to the sublattice $\Z^n$ of $L$. Also, $u\in L$ since $u=A^{-1}b$ and $b \in \Z^n$. 
Then since $\prod_{i=1}^{n}(\mu_i+1)> \Delta$, by the pigeon hole principle, there exists $x_1 = u - \sum_{i=1}^n\mu_i'a'_i$ and $x_2 = u - \sum_{i=1}^n\mu''_ia'_i$, such that $0\leq \mu_i'\leq \mu_i$, $0\leq \mu_i''\leq \mu_i$ for $1\leq i\leq n$, $x_1\neq x_2$, and $x_1- x_2\in \Z^n$.
Then, $v+ (x_1 - x_2)$ and $v-(x_1 - x_2)$ are both in $C\cap \Z^n$ and therefore in $X$, contradicting the fact that $v$ is a vertex of $X$. 
\end{proof}

\begin{theorem}\label{thm::all-vert-1}
 With the same notation as in Lemma \ref{lem::rep-of-int} and Lemma \ref{lem::rep-of-vert}, let $S$ be the simplex given by the convex hull of $\{u, u - (\Delta-1)a'_1, u - (\Delta-1)a'_2,\ldots,u - (\Delta-1)a'_n\}$. If $v = u - \sum_{i=1}^n\mu_ia'_i$ is a vertex of $X$, then $v\in S$.
\end{theorem}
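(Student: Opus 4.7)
The plan is to re-parametrize the simplex $S$ using the basis $\{a'_1, \ldots, a'_n\}$, reducing membership in $S$ to a simple inequality on the coefficients $\mu_i$, and then deduce that inequality from the product bound provided by Lemma~\ref{lem::rep-of-vert} via an elementary AM-GM-flavored estimate on non-negative integers.

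First I would observe that every point of $S$ admits the form
\[
\lambda_0 u + \sum_{i=1}^n \lambda_i \bigl(u - (\Delta-1)a'_i\bigr) = u - \sum_{i=1}^n (\Delta-1)\lambda_i\, a'_i,
\]
with $\lambda_0, \lambda_i \geq 0$ and $\lambda_0 + \sum_{i=1}^n \lambda_i = 1$. Setting $\tau_i := (\Delta-1)\lambda_i$, a point $u - \sum_i \tau_i a'_i$ lies in $S$ precisely when $\tau_i \geq 0$ for all $i$ and $\sum_i \tau_i \leq \Delta - 1$. Since $v = u - \sum_i \mu_i a'_i$ with $\mu_i \geq 0$ by Lemma~\ref{lem::rep-of-int}, it therefore suffices to show that
\[
\sum_{i=1}^n \mu_i \;\leq\; \Delta - 1.
\]

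Next I would invoke Lemma~\ref{lem::rep-of-vert}, which tells us that since $v$ is a vertex of $X$, the non-negative integers $\mu_i$ satisfy $\prod_{i=1}^n (\mu_i + 1) \leq \Delta$. The remaining step is the elementary inequality
\[
1 + \sum_{i=1}^n \mu_i \;\leq\; \prod_{i=1}^n (1 + \mu_i),
\]
valid for any $\mu_i \geq 0$. This is immediate by induction on $n$: the base case $n=1$ is equality, and expanding $(1+\mu_{n+1})\prod_{i=1}^n(1+\mu_i) \geq (1+\mu_{n+1})\bigl(1+\sum_{i=1}^n \mu_i\bigr) = 1 + \sum_{i=1}^{n+1}\mu_i + \mu_{n+1}\sum_{i=1}^n \mu_i$ gives the induction step since the cross term is non-negative. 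Combining this with Lemma~\ref{lem::rep-of-vert} yields $1 + \sum_i \mu_i \leq \Delta$, i.e., the desired bound, and hence $v \in S$.

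I do not expect any serious obstacle: the two nontrivial ingredients (the integrality of the $\mu_i$ and the product bound) have already been established in Lemmas~\ref{lem::rep-of-int} and~\ref{lem::rep-of-vert}, and the remainder is just the re-parametrization of $S$ plus the elementary inequality above. The only point that warrants a sentence of care is verifying that the parametrization of $S$ in terms of the $a'_i$ is correct, because $\{a'_1,\ldots,a'_n\}$ is a basis of $\R^n$ and so the representation $v = u - \sum_i \mu_i a'_i$ is unique, ensuring that the condition $\sum_i \mu_i \leq \Delta - 1$ is both necessary and sufficient for $v \in S$.
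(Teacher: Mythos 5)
Your proof is correct, and it finishes the argument by a genuinely different (and in fact more economical) route than the paper. Both arguments rest on the same two ingredients --- the representation $v = u - \sum_i \mu_i a'_i$ with $\mu_i \in \Z$, $\mu_i \geq 0$ from Lemma~\ref{lem::rep-of-int} and the product bound $\prod_i(\mu_i+1) \leq \Delta$ from Lemma~\ref{lem::rep-of-vert} --- but they convert the product bound into membership in $S$ differently. The paper orders the nonzero coefficients, proves by induction on their number $K$ the claim $K\mu_K \leq \Delta-1$, and then exhibits $v$ explicitly as the convex combination $\frac{1}{K}\sum_{i=1}^K\bigl(u-\mu_i K a'_i\bigr)$ of points lying on edges of $S$. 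You instead use the barycentric description of $S$ in the basis $\{a'_1,\ldots,a'_n\}$ (legitimate, as you note, because the $a'_i$ are columns of the invertible matrix $A^{-1}$, so the coefficients are unique), which reduces everything to the single inequality $\sum_i \mu_i \leq \Delta-1$, and you obtain that from the elementary expansion inequality $1+\sum_i\mu_i \leq \prod_i(1+\mu_i)$ for nonnegative reals. Your inequality is exactly what is needed, whereas the paper's claim $K\mu_K \leq \Delta-1$ is a stronger statement (since $\sum_i\mu_i \leq K\mu_K$) that costs an induction; your route avoids both the WLOG ordering and the explicit convex-combination construction, and it does not even use integrality of the $\mu_i$ beyond what Lemma~\ref{lem::rep-of-vert} already consumed. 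The only degenerate case is $\Delta=1$, where the rescaling $\lambda_i=\tau_i/(\Delta-1)$ is undefined; there the product bound forces all $\mu_i=0$, so $v=u\in S$ trivially (the paper's proof glosses over the analogous case $K=0$ in the same way).
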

\begin{proof}
By Lemma \ref{lem::rep-of-vert}, we have $\prod_{i=1}^{n}(\mu_i+1)\leq \Delta$. Without loss of generality let $1\leq \mu_{1}\leq \mu_{2}\leq \ldots\leq \mu_{K}$, and the others are $0$. 

{\bf Claim} $K\cdot \mu_K\leq \Delta - 1$.
\begin{proof}
We will prove the claim by induction. When $K=1$, this is trivial. Assume it is true for $K=K_0\geq 1$. Consider $K=K_0+1$. Let $\Delta' = \prod_{i=2}^K (\mu_i+1)$. Then we have $\Delta-1 = \Delta'\cdot(\mu_1+1)-1\geq (\mu_K\cdot(K-1)+1)(\mu_1+1) -1\geq 2\mu_K\cdot(K-1) + 2 -1\geq \mu_K\cdot K$, where the first inequality follows from the induction hypothesis, the second inequality follows from the fact that $\mu_1 \geq 1$ and the final inequality follows from the fact that $K \geq 2$. 
\end{proof}

This claim implies that $v = \frac{1}{K}\sum_{i=1}^K(u-\mu_iKa'_i)\in S$, which finishes the proof. 
\end{proof}

%
%

{The conclusions and techniques of Lemma~\ref{lem::rep-of-vert} and Theorem~\ref{thm::all-vert-1} have appeared in the literature before, although in slightly different language; see, e.g.,~\cite{MR0182454,wolseyBook,sch,conforti2014integer}. We include our particular versions and proofs to keep the paper self-contained.}

We now have all the pieces together to design an algorithm that enumerates a polynomial sized superset of all the vertices of the integer hull.


\begin{algorithm}[H]
\caption{Vertices of the integer hull}\label{alg::large-width}
\begin{algorithmic}
\STATE{\textbf{Input:} A simplex $S= \{x\in \R^n: Ax\leq b\}$ with $\Delta_A \leq \Delta$, and smallest facet width greater than or equal to $\Delta -1$.}
\STATE{
\textbf{Output:} A set $V$ of {cardinality} polynomial in $n$ that includes all the vertices of {the} integer hull of $S$.}
\STATE{Let $A_1x\leq b^{(1)}$, $A_2x\leq b^{(2)}\ldots,A_{n+1}x\leq b^{(n+1)}$ be all the combinations of $n$ inequalities of $Ax\leq b$.}
\STATE{Initialize $V$ as an empty set.}
\FOR{$i=1:(n+1)$}
\STATE{Compute $u=A_i^{-1}b^{(i)}$. Let $a_j'$ denote the $j$-th the column of $A_i^{-1}$.}
\STATE{Let $S_i$ be the convex hull of the set $\{u, u - (\Delta-1)a'_1, u - (\Delta-1)a'_2,\ldots,u - (\Delta-1)a'_n\}$}
\STATE{Apply Algorithm \ref{alg::bounded-width} to get all the integer points in $S_i$ and include them in $V$.}
\ENDFOR
\end{algorithmic}
\end{algorithm}

\begin{theorem}\label{thm::all-vert}
 The set $V$ computed in Algorithm~\ref{alg::large-width} includes all vertices of the convex hull of $S\cap \Z^n$.  
\end{theorem}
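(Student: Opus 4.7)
My plan is to show that every vertex $v$ of $\conv(S\cap\Z^n)$ belongs to $S_i$ for some $i\in\{1,\ldots,n+1\}$; this implies that Algorithm~\ref{alg::bounded-width}, called on $S_i$ during the $i$-th pass of the outer loop, will enumerate $v$ and add it to $V$.

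The first observation is that $A_i$ is an $n\times n$ sub-matrix of $A$, so $1\le |\det(A_i)|\le \Delta$, whence Theorem~\ref{thm::all-vert-1} applies to the simplicial cone $C_{u_i}:=\{x: A_ix\le b^{(i)}\}$: any vertex of $\conv(C_{u_i}\cap\Z^n)$ lies in the small simplex $S_i$. It therefore suffices to prove the intermediate claim: \emph{for every vertex $v$ of $\conv(S\cap\Z^n)$ there is an index $i$ such that $v$ is also a vertex of $\conv(C_{u_i}\cap\Z^n)$.}

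To prove this claim, let $v$ be a vertex of $\conv(S\cap\Z^n)$ and pick a direction $c\in\R^n$ in the relative interior of the (full-dimensional) normal cone of $v$ in $\conv(S\cap\Z^n)$, so that $v$ is the unique maximizer of $c\cdot x$ over $S\cap\Z^n$. By perturbing $c$ while staying inside this normal cone, I would arrange the following: (i) the linear program $\max\{c\cdot x : x\in S\}$ has a unique vertex optimum $u_*\in\{u_1,\ldots,u_{n+1}\}$; and (ii) $c\cdot v > c\cdot u_\ell$ for every $\ell\ne *$. Condition (ii) is achievable because $v$ lies in the relative interior of the face of $S$ spanned by the vertices whose barycentric coordinates in $v$ are positive. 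With $c$ satisfying (i) and (ii), the maximum of $c\cdot x$ over the opposite facet $F_*$ equals $\max_{\ell\ne *} c\cdot u_\ell < c\cdot v$. Since $c\cdot x$ strictly decreases along every ray of $C_{u_*}$ emanating from $u_*$, every point of $C_{u_*}\setminus S$ satisfies $c\cdot x < c\cdot v$, so the level set $\{x\in C_{u_*}: c\cdot x\ge c\cdot v\}$ is contained in $S$. The unique maximizer of $c$ over $C_{u_*}\cap\Z^n$ is therefore the same as the unique maximizer of $c$ over $S\cap\Z^n$, namely $v$, which shows that $v$ is a vertex of $\conv(C_{u_*}\cap\Z^n)$.

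The step I expect to be the main obstacle is the simultaneous perturbation: we must realize (i), (ii), and the unique-integer-maximizer property of $v$ with a single direction $c$. Carrying this out requires carefully intersecting the normal cone of $v$ in $\conv(S\cap\Z^n)$ with an appropriate region of the normal fan of $S$, and the hypothesis on facet widths $w_i\ge \Delta-1$ is where any remaining geometric slack will be needed to guarantee that the vertex $v$ of the cone integer hull identified above indeed lands in the small simplex $S_*$ rather than only in $C_{u_*}$.
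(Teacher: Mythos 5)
Your proposal takes a genuinely different route from the paper, but it has a gap at its central step. You reduce the theorem to the claim that every vertex $v$ of $\conv(S\cap\Z^n)$ is a \emph{vertex} of the integer hull of some corner cone $C_{u_i}$, and the level-set computation you give is correct \emph{granted} (i) and (ii). But the existence of a single $c$ that simultaneously lies in the interior of the normal cone of $v$ with respect to $\conv(S\cap\Z^n)$, has a unique LP optimum $u_*$ over $S$, and satisfies $c\cdot v>c\cdot u_\ell$ for all $\ell\neq *$ is exactly the hard content, and it is asserted rather than proved. The justification you offer (that $v$ lies in the relative interior of the face $F$ of $S$ spanned by the vertices with positive barycentric coordinates) cuts the other way: since $v$ is a strict convex combination of the vertices of $F$, for \emph{every} admissible $c$ at least one vertex of $F$ satisfies $c\cdot u_\ell\ge c\cdot v$, so (ii) can hold only if that dominating vertex is essentially unique \emph{and} happens to coincide with the global LP optimum $u_*$ of $S$. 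Nothing in the sketch shows such a $c$ exists inside the normal cone of $v$, whose shape is dictated by the nearby integer points, not by $S$. For instance, for $S=\{(x,y):x\ge 0,\ y\ge 0,\ x+2y\le 7\}$ (a legitimate instance with $\Delta=2$) and the hull vertex $v=(1,3)$, every $c$ making $v$ the unique integer maximizer satisfies $c\cdot(0,3.5)>c\cdot v$; condition (ii) survives there only because $(0,3.5)$ is also the LP optimum for all such $c$, a coincidence you never establish in general. If $v-u_\ell$ lies in the tangent cone of the integer hull at $v$ for two distinct $\ell$, then $v$ is weakly dominated throughout its normal cone by two vertices of $S$ and no admissible $c$ exists at all; ruling this out under the hypotheses is precisely what a proof along your lines must do.

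Note also that your intermediate claim is strictly stronger than needed, and the paper avoids it entirely: it picks $c$ with $v$ the unique maximizer over $S\cap\Z^n$, chooses $i$ so that $c$ is maximized over $\{x:A_ix\le b^{(i)}\}$ at the apex, and compares values via $\max_{x\in S_i\cap\Z^n}c^Tx\le\max_{x\in S\cap\Z^n}c^Tx\le\max\{c^Tx:A_ix\le b^{(i)},\ x\in\Z^n\}$; Theorem~\ref{thm::all-vert-1} shows the right-hand maximum is attained at a point of $S_i\cap\Z^n$, so all three coincide and uniqueness of $v$ gives $v\in S_i$, with no need for $v$ to be a vertex of the cone's integer hull. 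The facet-width hypothesis enters exactly to give $S_i\subseteq S$ (the left inequality) --- not, as your closing sentence suggests, to put vertices of the cone hull into $S_i$, which Theorem~\ref{thm::all-vert-1} does with no width assumption. That your route uses the width hypothesis nowhere essential is a warning sign that the difficulty has been displaced into the unproven perturbation step. Finally, the stronger claim does not follow from the paper's results either: the integer hull is the intersection of the corner-cone hulls (Corollary~\ref{cor:corner}), and a vertex of an intersection need not be a vertex of any of the intersected polyhedra.
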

\begin{proof}
We use the same notation as in Algorithm~\ref{alg::large-width}.  Consider a vertex $v$ of the convex hull of $S\cap \Z^n$. Let $c\in \R^n$ be an objective vector such that $v$ is the unique solution to 
$$
    \underset{x\in S\cap \Z^n}{\argmax} ~c^Tx.
$$
There exists {an} $i$ such that $A_i^{-1}b^{(i)}$ is the solution to
$$
    \underset{x\;:\; A_ix\leq b^{(i)}}{{\argmax}} ~c^Tx.
$$
Since the facet width of $S$ is at least $\Delta - 1 $, we have $S_i \subseteq S \subseteq \{x\in \R^n:A_ix\leq b^{(i)}\}$. Thus, 

\begin{equation}\label{eq:SiPC}\underset{x\in S_i\cap \Z^n}{\max} ~c^Tx \leq \underset{x\in S\cap \Z^n}{\max} ~c^Tx \leq \underset{\substack{A_ix\leq b^{(i)}\\x\in \Z^n}}{\max} ~c^Tx.\end{equation}
On the other hand, by Theorem~\ref{thm::all-vert-1}, $S_i$ contains all the vertices of convex hull of $\{x\in \Z^n:A_ix\leq b^{(i)}\}$. Thus, all three inequalities in~\eqref{eq:SiPC} are actually equalities. Since $v$ is the unique solution to ${\argmax}\{c^Tx: x\in S\cap \Z^n\}$ and $S_i\subseteq S$, we see that $v \in S_i$.
\end{proof}

%

\begin{proof}[Proof of Theorem~\ref{thm::all-vert-final}]
Let $V=\{v_1, v_2, \ldots, v_\alpha\}$ be the set computed by Algorithm \ref{alg::large-width} with $\alpha:= |V|$. The number of integer points in each $S_i$ in Algorithm~\ref{alg::large-width} is polynomial in $n$ by Theorem~\ref{thm::small-simplex} since the facet widths of the first $n$ facets of each $S_i$ are at most $\Delta - 1$. Thus, $\alpha$ is a function of $n$ and $\Delta$, and polynomial in $n$. 
To check whether for a given $i \in \{1, \ldots, \alpha\}$, $v_i$ is a vertex of the convex hull of $V$ (which is the same as the convex hull of $S\cap \Z^n$), we just need to check the feasibility of the following polynomially many constraints on $\mu_1, \ldots, \mu_\alpha$:
\begin{align*}
    &v_i = \sum_{\substack{j=1\\j\neq i}}^\alpha \mu_jv_j, \qquad\sum_{\substack{j=1\\j\neq i}}^\alpha\mu_j=1, \qquad \mu_j\geq 0\mbox{ for }j\neq i
\end{align*} Solving these polynomially many linear programs, one can filter out the vertices of the integer hull of $S$ from $V$.
\end{proof}

\section{Concluding Remarks}\label{sec:conclude}

A similar argument as the proof of Theorem~\ref{thm::all-vert} gives the following result which we believe to be interesting because it shows a connection between the integer hull of a simplex and the corner polyhedra associated with it~\cite{MR0256718}.

\begin{corollary}\label{cor:corner}
Let $S$ be a simplex described by $Ax\leq b$ where $A\in \Z^{(n+1)\times n}$ and $b\in \Z^{n+1}$ such that $\Delta_A \leq \Delta$, and all its facet widths are greater than or equal to $\Delta -1$. Then the integer hull of $S$ is the intersection of all the integer hulls of the simplicial cones derived by $Ax\leq b$. 
\end{corollary}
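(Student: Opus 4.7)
The plan is to prove two set inclusions between $\conv(S \cap \Z^n)$ and $\bigcap_{i=1}^{n+1} \conv(C_i \cap \Z^n)$, where $C_i := \{x : A_i x \le b^{(i)}\}$ denotes the $i$-th simplicial cone obtained by dropping one row from $Ax \le b$, using the notation of Algorithm~\ref{alg::large-width}. The easy direction, $\conv(S \cap \Z^n) \subseteq \bigcap_i \conv(C_i \cap \Z^n)$, is immediate from $S = \bigcap_i C_i \subseteq C_i$, which gives $S \cap \Z^n \subseteq C_i \cap \Z^n$ for each $i$.

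For the reverse inclusion I will mimic the LP-based argument in the proof of Theorem~\ref{thm::all-vert}. Fix $v \in \bigcap_i \conv(C_i \cap \Z^n)$; since this intersection lies in $\bigcap_i C_i = S$, the point $v$ is in the bounded set $S$. Suppose for contradiction that $v \notin \conv(S \cap \Z^n)$; separation then produces a vector $c \in \R^n$ with $c^T v > \max\{c^T x : x \in S \cap \Z^n\}$. The continuous LP $\max\{c^T x : x \in S\}$ attains its optimum at some vertex $u_{i_0} := A_{i_0}^{-1} b^{(i_0)}$ of $S$. The key preparatory step is to verify that $\max\{c^T x : x \in C_{i_0}\}$ is also finite and attained at $u_{i_0}$: optimality of $u_{i_0}$ over $S$ forces $c^T a'_{j,i_0} \ge 0$ for every edge direction $-a'_{j,i_0}$ emanating from $u_{i_0}$ (where $a'_{j,i_0}$ is the $j$-th column of $A_{i_0}^{-1}$), and since these same directions generate the recession cone of $C_{i_0}$, we obtain $c^T r \le 0$ for every $r$ in that recession cone.

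Now I invoke Theorem~\ref{thm::all-vert-1} applied to $C_{i_0}$: every vertex of $\conv(C_{i_0} \cap \Z^n)$ lies in the small simplex $S_{i_0}$ with vertices $u_{i_0}, u_{i_0} - (\Delta-1)a'_{1,i_0}, \ldots, u_{i_0} - (\Delta-1)a'_{n,i_0}$. The hypothesis that every facet width of $S$ is at least $\Delta - 1$ forces $S_{i_0} \subseteq S$. Chaining the resulting inequalities gives $c^T v \le \max\{c^T x : x \in \conv(C_{i_0} \cap \Z^n)\} = \max\{c^T x : x \in C_{i_0} \cap \Z^n\} \le \max\{c^T x : x \in S_{i_0} \cap \Z^n\} \le \max\{c^T x : x \in S \cap \Z^n\}$, contradicting the strict separation and completing the proof.

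I expect the main obstacle to be the identification of the correct cone $C_{i_0}$ together with the check that the LP over this cone is bounded with optimum at $u_{i_0}$; this is a small LP-duality calculation, but it is the one place where the argument must transition from a statement about the full simplex $S$ to a statement about a single corner cone. Everything after that transition follows by reusing the containment $S_{i_0}\subseteq S$ established in the proof of Theorem~\ref{thm::all-vert}.
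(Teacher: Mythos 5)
Your proposal is correct and takes essentially the same route as the paper: both directions hinge on the argument of Theorem~\ref{thm::all-vert}, namely choosing the corner cone $C_{i_0}$ whose LP is bounded with optimum at the corresponding vertex of $S$, invoking Theorem~\ref{thm::all-vert-1} together with $S_{i_0}\subseteq S$, and chaining the resulting maxima. The only cosmetic difference is that you separate an arbitrary point of $\bigcap_i \conv(C_i\cap\Z^n)$ from $\conv(S\cap\Z^n)$ and derive a contradiction, whereas the paper fixes a vertex of the intersection and its (unique) maximizing objective to conclude that the vertex is an integer point; the underlying inequalities are identical.
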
 

\begin{proof}
Let $P$ be the intersection of all the integer hulls of the $n$ simplicial cones derived by selecting $n$ inequalites from the system $Ax\leq b$. Consider a vertex $v$ of $P$. It suffices to prove that $v\in \Z^n$. Let $c\in \R^n$ be an objective vector such that $v$ is the unique solution to 
\begin{equation}\label{eq::argmaxc}
    \underset{x\in P}{\argmax} ~c^Tx,
\end{equation}
With the similar argument as in Theorem \ref{thm::all-vert}, we can prove that 
\begin{equation}\underset{x\in S_i\cap \Z^n}{\max} ~c^Tx = \underset{x\in P}{\max} ~c^Tx = \underset{\substack{A_ix\leq b^{(i)}\\x\in \Z^n}}{\max} ~c^Tx\end{equation}
for some $i$. Since $S_i\cap \Z^n\subseteq P$, and $v$ is the unique solution to (\ref{eq::argmaxc}), so $v\in S_i\cap \Z^n$.
\end{proof}

In general, there exist simplices such that intersection of the corner polyhedra is a strict superset of the integer hull. Corollary~\ref{cor:corner} says that for ``fat" simplices the intersection is indeed the integer hull (this is also easily seen to hold for simplices with at most one integer point).

The idea from Algorithm~\ref{alg::bounded-width} of enumerating along the facet directions leads us to the following conjecture which we believe is an interesting discrete geometry question. The conjecture is an attempt to generalize the following facts. When $\Delta_A = 1$, the polyhedron defined by $P:=\{x\in \R^n: Ax \leq b\}$ has integral vertices if it is nonempty. When $\Delta_A = 2$, it was shown in~\cite{veselov2009integer} that if $P$ is full dimensional, then $P$ must contain an integer point. One can summarize both these statements by saying that $P\cap \Z^n = \emptyset$ implies the facet width of $P$ is at most $\Delta_A - 2$. 

\begin{conjecture}\label{conj:thin} There is a function $g : \N \to \N$ such that for any $A \in \Z^{m\times n}$ and $b \in \Z^m$, if $\{x \in \Z^n : Ax \leq b\} = \emptyset$, then there is a constraint $\langle a_i, x \rangle \leq b_i$ for some $i \in \{1, \ldots, m\}$ such that $$w(a_i,P) \leq g(\Delta_A),$$ where $P = \{x \in \R^n: Ax \leq b\}$.
\end{conjecture}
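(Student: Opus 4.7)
The plan is to establish the contrapositive: if every facet width of $P = \{x : Ax \leq b\}$ exceeds $g(\Delta_A)$ for a suitable function $g$, then $P \cap \Z^n \neq \emptyset$. This casts the conjecture as a strengthening of Khinchine's flatness theorem in the bounded-subdeterminant regime, where the flat direction must be a facet normal rather than an arbitrary primitive integer vector.

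My first move is to reduce to the simplex case. A natural attempt is to find $n+1$ rows of $A$ whose defining inequalities carve out a bounded simplex $S$, either contained in or containing $P$, with facet widths that can be related to those of $P$. Neither inclusion direction gives an immediate reduction: a simplex $S \supseteq P$ need not be lattice-free, while a simplex $S \subseteq P$ may have facets unrelated to those of $P$. A more promising route is to consider the tangent cone at a vertex of $P$ together with a single ``opposite'' inequality chosen to cut out a sandwich simplex, and use the bounded-ratio property of simplex facet widths (Theorem~\ref{thm::small-simplex}) to propagate largeness of widths from $P$ down to this simplex.

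For a full-dimensional simplex $S$ with $A \in \Z^{(n+1)\times n}$, $b \in \Z^{n+1}$, $\Delta_A \leq \Delta$, and every facet width at least $g(\Delta)$, I would pick a vertex $u$ and the simplicial cone $C_u$ defined by the $n$ facets through $u$. Let $A_u$ denote the $n \times n$ submatrix of $A$ whose rows are these facets. By Lemma~\ref{lem::rep-of-int}, the integer points of $C_u$ are exactly $\{u - \sum_i \mu_i a'_i : \mu \in \Z_{\geq 0}^n\}$, where the $a'_i$ are the columns of $A_u^{-1}$. The remaining inequality of $S$ becomes a knapsack constraint $\sum_i c_i \mu_i \leq C$ whose ratios $C/c_i$ are the facet widths of $S$ corresponding to the facets through $u$ (Proposition~\ref{prop::constant-ratio}). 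Under the large-width hypothesis this knapsack region is large, and finding an integer point in $S$ reduces to showing that some $\mu \in \Z_{\geq 0}^n$ in this region lies in the specific coset $b_u + A_u \Z^n$, which has index $|\det(A_u)| \leq \Delta$ in $\Z^n$.

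The main obstacle is this quantitative step: one must show the knapsack region covers a translate of a fundamental domain of $A_u \Z^n$. Since there are only $|\det(A_u)| \leq \Delta$ cosets, a pigeonhole argument combined with the Hermite-normal-form bounds of Lemma~\ref{lem::bounded-elem-inverse} ought to yield a $g(\Delta)$ that is, at worst, single-exponential in $\Delta$; but the interaction between the knapsack coefficients and the Smith invariants of $A_u$ is delicate. This is precisely where the Veselov--Chirkov combinatorial analysis for $\Delta_A = 2$ presumably becomes hard to generalize, and I view this, together with the simplex-reduction step above, as the real heart of the conjecture.
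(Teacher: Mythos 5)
This statement is Conjecture~\ref{conj:thin}: the paper offers no proof of it, and explicitly presents it as an open question, supported only by the known cases $\Delta_A=1$ (total unimodularity forces integral vertices) and $\Delta_A=2$ (Veselov--Chirkov). So there is no ``paper proof'' to match, and your proposal does not supply one either: it is a strategy sketch whose two load-bearing steps are left open, as you yourself acknowledge.

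Concretely, the gaps are these. First, the reduction from a general system $A\in\Z^{m\times n}$ to a simplex is not established. The conjecture has no restriction on $m$ and no nonzero-minor hypothesis, so $P$ need not be (or reduce to) a simplex; a simplex formed by $n+1$ rows of $A$ containing $P$ need not even exist (the normals of $n+1$ rows need not positively span $\R^n$ --- think of a box, which needs $2n$ constraints), and when it does exist, its lattice points tell you nothing about $P\cap\Z^n$, while a simplex inside $P$ built from a tangent cone plus one opposite row has the reverse problem, exactly as you note. Theorem~\ref{thm::small-simplex} cannot bridge this, since it only compares facet widths within one simplex. Second, even in the simplex case, the decisive quantitative step --- that when every width $C/c_i$ exceeds $g(\Delta)$ the knapsack region $\{\mu\in\Z^n_{\geq 0}: \sum_i c_i\mu_i\leq C\}$ must meet the specific coset of $A_u\Z^n$ (of index $|\det(A_u)|\leq\Delta$) corresponding to integral points --- is precisely a restatement of the conjecture for simplices, and the pigeonhole-plus-Hermite-form bounds you invoke (Lemma~\ref{lem::bounded-elem-inverse}, Lemma~\ref{lem::rep-of-int}) do not by themselves force the region to cover a fundamental domain or hit the right residue class. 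Until that step is carried out, nothing is proved; what you have written is a reasonable research plan (and its framing as a facet-normal flatness theorem matches the spirit in which the paper poses the conjecture), but it should not be presented as a proof.
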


In other words, if a polytope has no integer point, then one of its facet widths is bounded by an explicit function of the maximum subdeterminant $\Delta_A$. If this conjecture is true, then by enumerating all the ``slices" in the direction of this facet and recursing on dimension (like in Lenstra-style algorithms), one would obtain an algorithm that decides integer feasibility in time $2^{O(nh(\Delta_A))}\poly(n,size(A,b))$ for some explicit function $h$.
Well-known calculations show that if $\{x \in \Z^n : Ax \leq b\} \neq \emptyset$, then there is a vector $x^\star \in \Z^n$ such that $Ax^\star \leq b$ and each coordinate of $x^\star$ has absolute value at most $n(n+1)\Delta_A$. Thus, a brute force enumeration over the box $[-n(n+1)\Delta_A, n(n+1)\Delta_A]^n$ could work and has complexity $2^{O(n\log_2 n\log_2 \Delta_A)}$. But there does not seem to be an obvious way to improve the $O(n\log_2 n)$ factor to $O(n)$ in the exponent. Thus, Conjecture~\ref{conj:thin} seems to be an intermediate step towards resolving the major open question of designing a $2^{O(n)}$ algorithm for integer optimization. Even without this motivation, we find Conjecture~\ref{conj:thin} to be an intriguing geometric question and worthy of study. Its resolution will give us more insight into how the geometry of a polytope is dictated by its algebraic description.

We emphasize that one needs to impose integrality of the right hand side $b$ in the hypothesis of Conjecture~\ref{conj:thin}; otherwise, the conjecture is false as is shown by the following example.

\begin{example}
Let $I_n$ be an $n\times n$ identity matrix, $a = (1,1,1,\ldots, 1)^T\in \R^n$, and $b = (-\frac{1}{2}, -\frac{1}{2}, \ldots, -\frac{1}{2}, n+\frac{1}{2})^T\in \R^{n+1}$. Then let $P$ be described by 
\begin{align}
\begin{bmatrix}
-I_n\\a^T
\end{bmatrix}x\leq b.
\end{align}
$P$ is a full-dimensional simplex with subdeterminants bounded by 1 and its smallest facet width is $O(n)$, but it does not contain any integer points. 
\end{example}

\noindent{\bf An alternate proof of Theorem~\ref{thm:main}.} As mentioned in Section~\ref{sec:our-results}, our main result can be obtained using completely different tools, as discovered by Dr. Joseph Paat~\cite{paat-personal}. We sketch these arguments here. In~\cite{paat2020integrality,paat2021integrality}, the authors show that under the assumptions of Theorem~\ref{thm:main}, the convex hull of integer points in the polyhedron $\{x\in \R^n : Ax \leq b\}$ is exactly the same as the convex hull of a mixed-integer reformulation: i.e., $$\conv\{x \in \Z^n : Ax \leq b\} = \conv\{x \in \R^n: Ax \leq b, Wx \in \Z^k\},$$ where $W \in \Z^{n\times k}$ and $k$ is a constant depending on $\Delta$. Thus, the vertices of the integer hull of $\{x\in \R^n : Ax \leq b\}$ can be enumerated by enumerating the vertices of $\conv\{x \in \R^n: Ax \leq b, Wx \in \Z^k\}$. The vertices of this latter set can be obtained by enumerating the $k$ dimensional faces of the polyhedron $\{x\in \R^n : Ax \leq b\}$, and then enumerating the vertices of the integer hull (in $\R^n$) of these $k$-dimensional faces. In~\cite[Lemma 8]{paat2021integrality}, the authors also show the number of rows of $A$ is upper bounded by $n + \Delta^2$ under the hypotheses of Theorem~\ref{thm:main}. Thus, the number of these $k$ dimensional faces is upper bounded by ${n+\Delta^2\choose{n-k}} = {n+\Delta^2\choose{\Delta^2 + k}}$ which is polynomial in $n$. The vertices of the integer hull of these $k$ dimensional faces can be enumerated in time polynomial in the encoding sizes of $A$ and $b$, using the algorithm in~\cite{cook-hartmann-kannan-mcdiarmid-1992}, since the dimension $k$ is a constant independent of $n$.

In contrast, our proof is based on different ideas and we believe that the main appeal of our approach is in the three results stated in Theorems~\ref{thm::bounded-width-all},~\ref{thm::small-simplex},~\ref{thm::all-vert-final}.

  \bibliographystyle{plain}
  \bibliography{full-bib}
\end{document}